\documentclass[letterpaper, 10pt, conference]{ieeeconf}  %

\IEEEoverridecommandlockouts                              %

\overrideIEEEmargins                                      %

\usepackage{graphics} %
\usepackage{epsfig} %
\usepackage{amsmath} %
\usepackage{amssymb}  %

\usepackage{hyperref}
\usepackage{cleveref}
\usepackage{xcolor}
\usepackage{subfigure}
\usepackage{widetext}
\usepackage{xspace}

\usepackage{amsthm}
\usepackage[symbol]{footmisc}

\newcommand{\ellone}{$\mathcal{L}_1$\xspace}

\DeclareMathOperator*{\argmax}{arg\,max}
\DeclareMathOperator*{\argmin}{arg\,min}

\newtheorem{theorem}{Theorem}
\newtheorem{corollary}{Corollary}
\newtheorem{lemma}{Lemma}

\newtheorem{definition}{Definition}
\newtheorem{assumption}{Assumption}
\newtheorem{remark}{Remark}

\newif\ifisnotpreprint
\isnotpreprintfalse  %

\allowdisplaybreaks

\title{\LARGE \bf
\ellone Adaptive Optimizer for Online Time-Varying Convex Optimization
}

\author{Jinrae Kim$^{1}$ and Naira Hovakimyan$^{1}$%
\thanks{*This work is financially supported by
Air Force Office of Scientific Research (AFOSR) grant \#FA9550-21-1-0411,
National Aeronautics and Space Administration (NASA) grant \#80NSSC20M0229,
NASA University Leadership Initiative (ULI) grant \#80NSSC22M0070,
and National Science Foundation (NSF) Information and Intelligent Systems (IIS) grant \#23-31878.
}%
\thanks{$^{1}$Jinrae Kim and Naira Hovakimyan are with Department of Mechanical Engineering,
The Grainger College of Engineering,
University of Illinois Urbana-Champaign, Urbana, IL 61801
        {\tt\small \{jinrae,nhovakim\}@illinois.edu}}%
}

\begin{document}

\maketitle
\pagestyle{plain}
\pagenumbering{arabic}

\begin{abstract}
We propose an adaptive method for online time-varying (TV) convex optimization, termed $\mathcal{L}_{1}$ adaptive optimization ($\mathcal{L}_{1}$-AO).
TV optimizers utilize a prediction model to exploit the temporal structure of TV problems,
which can be inaccurate in the online implementation.
Inspired by $\mathcal{L}_{1}$ adaptive control,
the proposed method augments an adaptive update law to estimate and compensate for the uncertainty from the prediction inaccuracies.
The proposed method provides performance bounds of the error in the optimization variables and cost function,
allowing efficient and reliable optimization for TV problems.
\ifisnotpreprint
Numerical simulation\footnote[2]{Proofs and simulation settings can be found in  \cite{kim2024mathcall1adaptiveoptimizeruncertain}.}
\else
Numerical simulation
\fi
results demonstrate the effectiveness of the proposed method for online TV convex optimization.
\end{abstract}

\section{INTRODUCTION}
Optimization is critical across a wide range of applications.
As streaming data are available in modern engineering systems,
it is necessary to solve the optimization problem in real-time in dynamic environments~\cite{camachoModelPredictiveControl2007,arslanExactRobotNavigation2016,arslan2019sensor,yang2016online}.
The challenge is that quite often the optimization objective and constraint functions change over time.
These problems are referred to as \textit{time-varying} (TV) optimization.

Solving TV optimization problems at each time instant by conventional time-invariant methods may be computationally burdensome.
Instead,
one can exploit the underlying temporal structure (dynamics)
to predict how the problem varies over time and update the optimization variables accordingly,
reducing the computational load significantly~\cite{simonettoTimeVaryingConvexOptimization2020a}.
\begin{figure}[t!]
    \centering
    \subfigure[]{\includegraphics[width=0.45\textwidth]{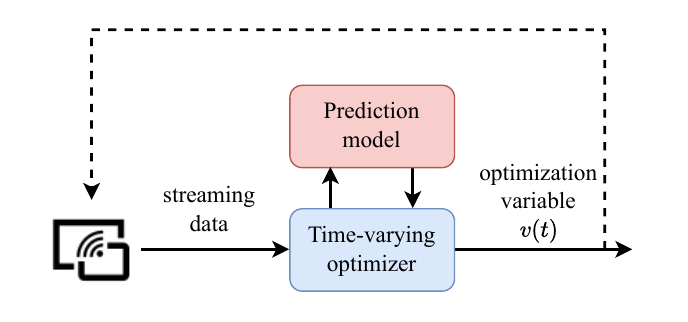}}
    \subfigure[]{\includegraphics[width=0.45\textwidth]{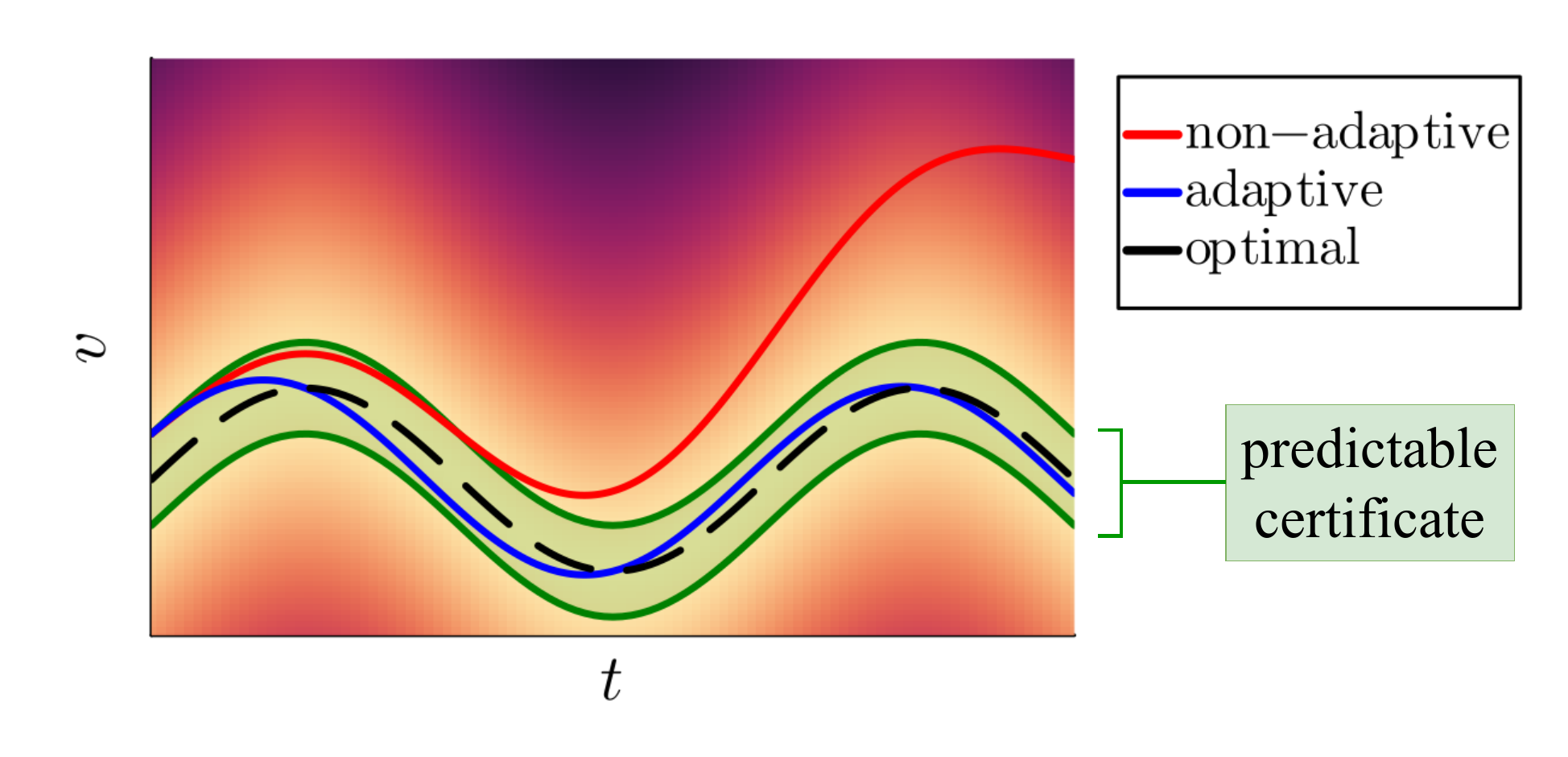}}
    \caption{
    (a)
    Time-varying (TV) optimization methods
    update the optimization variable continuously using a prediction model to track the optimal solution.
    (b)
    In the online implementation,
    the prediction model may be inaccurate, leading to performance degradation.
    The proposed $\mathcal{L}_{1}$ adaptive optimization ($\mathcal{L}_{1}$-AO) compensates for the uncertainty from prediction inaccuracies and provides predictable robustness certification for TV convex optimization problems.
    The heatmap indicates the TV cost function.
    }
    \label{fig:tvopt}
\end{figure}
In practice,
it is challenging to accurately predict how the cost and constraint functions change over time.
The prediction model is employed based on imperfect prior knowledge,
making the underlying dynamics \textit{uncertain} in online implementation.
The inaccuracy of the prediction model may degrade the performance of the TV algorithm significantly.
Therefore, it is crucial to compensate for the uncertainty from the prediction inaccuracies for efficient and reliable TV optimization.

\textbf{Statement of Contributions:}
We propose an adaptive method for continuous-time (CT) online TV convex optimization,
which recovers the performance of the baseline optimizer by compensating for the uncertainty from the prediction inaccuracies. This allows \textit{robustness certification} w.r.t. the error in optimization variables and optimality gap.
\autoref{fig:tvopt} illustrates the proposed method.
Let $\Phi(t, v)$ be the smooth and strongly convex cost function
such that $\nabla_{vv}\Phi \succeq m_{f} I$ for some $m_{f} > 0$.
The optimization variable at time $t$ is denoted by $v(t)$.
The optimal solution $v^{\star}$ attains zero gradient, i.e.,
$v^{\star}(t) = \argmin_{v} \Phi(t, v) \iff \nabla_{v} \Phi(t, v^{\star}(t)) = 0$.
The gradient dynamics are given as
$\dot{\nabla}_{v} \Phi = \nabla_{vt}\Phi + \nabla_{vv} \Phi \dot{v}$.
Consider a baseline TV optimizer $\dot{v}_{b}$, designed based on the approximate gradient dynamics $\dot{\nabla}_{v} \Phi = \hat{\nabla}_{vt}\Phi + \nabla_{vv} \Phi \dot{v}_{b}$ with a prediction model $\hat{\nabla}_{vt} \Phi$, i.e., $\lVert \nabla_{v}\Phi(t, v(t)) \rVert \to 0$ and $\lVert v(t) - v^{\star}(t) \rVert \to 0$ as $t \to \infty$ under the approximate dynamics with $\dot{v} = \dot{v}_{b}$.

\noindent\underline{\textit{Performance}}:
We define the performance by the gradient of the cost function and the tracking error of the optimization variable.
First,
the proposed method provides a feedback update law $\dot{v} = \dot{v}_{b} + \dot{v}_{a}$
such that the gradient is uniformly bounded by $\rho > 0$, i.e.,
$\lVert \nabla_{v} \Phi(t, v(t)) \rVert \leq \rho$, $\forall t \in [0, t_{f}]$,
where $\dot{v}_{a}$ is the adaptive update term,
$t_{f} \leq \infty$ is the operation horizon,
and $\rho > 0$ is a positive number that is computable \textit{a priori}.
Second, the proposed method ensures that
the tracking error of the optimization variable is uniformly bounded as
$\lVert v(t) - v^{\star}(t) \rVert \leq \rho / m_{f}, \forall t \in [0, t_{f}]$.
It implies that the proposed method can provide performance bounds in the presence of prediction inaccuracies.

\noindent\underline{\textit{Optimality}}:
We define the optimality by the gap of the cost function, i.e., $\Phi(t, v(t)) - \Phi(t, v^{\star}(t))$.
Similar to the performance,
the proposed method guarantees the uniform boundedness of the optimality gap as
$0 \leq \Phi(t, v(t)) - \Phi(t, v^{\star}(t)) \leq \rho^{2} / m_{f}, \forall t \in [0, t_{f}]$.
It also implies that the proposed method can provide the optimality bound in the presence of prediction inaccuracies.

\textbf{Related work:}
TV optimization algorithms have been developed in different time settings including discrete-time (DT)~\cite{simonettoPredictionCorrectionAlgorithmsTimeVarying2017} and CT systems~\cite{fazlyabPredictionCorrectionInteriorPointMethod2018,rahiliDistributedContinuousTimeConvex2017}.
CT algorithms can be considered as the limit case of DT algorithms, desirable for real-time applications such as robotics and control~\cite{simonettoTimeVaryingConvexOptimization2020a,kim_smooth_2025}.
To improve the tracking performance,
a TV optimization algorithm can exploit a model to predict how the TV problem evolves.
If such a prediction is not considered, the solution may drift away over time~\cite{popkovGradientMethodsNonstationary2005}.
The prediction model can be data-driven or model-based,
and an inaccurate prediction model may lead to performance degradation.
Interested readers are referred to a survey paper~\cite{simonettoTimeVaryingConvexOptimization2020a}
for more details on TV optimization.

\textbf{Notation:}
Let $\mathbb{N}$ and $\mathbb{R}$ be the sets of natural numbers and real numbers, respectively,
and $\mathbb{N}_{0} := \{0\} \cup \mathbb{N}$.
The sets of non-negative and positive real numbers are denoted by $\mathbb{R}_{\geq 0}$ and $\mathbb{R}_{ > 0}$, respectively.
For a natural number $p \in \mathbb{N}$, let $[p] := \{1, \ldots, p\}$.
Given a matrix $A$, let $\underline{\lambda}(A)$ be the smallest real part of $A$'s eigenvalues.
Given $n \in \mathbb{N}$,
$I_{n}$ denotes the $n \times n$ identity matrix.
The function argument may be simplified or omitted if the context is clear, e.g., $\Phi(t) = \Phi(t, v(t))$.

\section{PRELIMINARIES}
\label{sec:preliminaries}
Consider the following TV optimization problem given by
\begin{equation}
\label{eq:tvopt}
    \min_{v} f_{0}(t, v) \quad
    \text{subject to} \ f_{i}(t, v) \leq 0, \quad \forall i \in [p],
\end{equation}
where $t \in \mathbb{R}_{\geq 0}$ denotes the time,
$v \in \mathbb{R}^{n_{v}}$ is the optimization variable,
$f_{0}:  \mathbb{R}_{\geq 0} \times \mathbb{R}^{n_{v}} \to \mathbb{R}$ is the cost function,
and $f_{i}: \mathbb{R}_{\geq 0} \times \mathbb{R}^{n_{v}} \to \mathbb{R}$ are inequality constraints, $\forall i \in [p]$.
Suppose that $f_{i}(t, v)$ are twice differentiable w.r.t. $t$ and three-times differentiable w.r.t. $v$, $\forall i \in \{0\} \cup [p]$,
and $f_{0}(t, \cdot)$ is uniformly strongly convex, i.e., $ \nabla_{vv} f_{0}(t, v) \succeq m_{f} I_{n_{v}}, \forall (t, v)$ for some $m_{f} > 0$.
The problem can be approximated by an unconstrained problem with log-barrier functions as follows~\cite{fazlyabPredictionCorrectionInteriorPointMethod2018}:
\begin{equation}
\label{eq:log_barrier}
    \min_{v} \ \Phi(t, v)
    := f_{0}(t, v)
    - \frac{1}{c(t)} \sum_{i \in [p]}  \log (-f_{i}(t, v)),
\end{equation}
where $c: \mathbb{R}_{\geq 0} \to \mathbb{R}_{> 0}$ is the penalty parameter that controls the quality of the approximation.
In this regard, only unconstrained problems will be considered in this study.
The minimizer can be characterized as $v^{\star}(t) = \argmin_{v} \Phi(t, v)  \iff \nabla_{v} \Phi (t, v^{\star}(t)) = 0$.

Solving the optimization problem in \eqref{eq:log_barrier} for each $t$
may be computationally burdensome in real-time applications.
Instead,
one can exploit the temporal structure to find the minimizer efficiently~\cite{simonettoTimeVaryingConvexOptimization2020a}.
Consider the gradient dynamics as follows:
\begin{equation}
\label{eq:grad_dyn}
    \dot{\nabla}_{v} \Phi(t)
    = \nabla_{vt} \Phi(t) + \nabla_{vv} \Phi(t) \dot{v}(t),
\end{equation}
with $\nabla_{v}\Phi(0) = \nabla_{v}\Phi(0, v(0))$.
The rate of optimization variables, $\dot{v}$, can be viewed as the \textit{control input} of the gradient dynamics.
Then, an update law $\dot{v}(t)$ can be designed to regulate the gradient to zero.

In the online implementation,
the knowledge about $\frac{\partial f_{i}}{\partial t}$ and $\nabla_{vt} f_{i}$, $\forall i \in \{0\} \cup [p]$ may be inaccurate,
which appears in the term $\nabla_{vt} \Phi$ in \eqref{eq:grad_dyn}.
For example, external streaming data $d(t)$, such as system state or environmental data, may determine the optimization problem, i.e., $\Phi(t, v) = \chi(d(t), v)$ for some function $\chi$,
while the knowledge about the dynamics of streaming data, $\dot{d}(t)$, is limited.
Suppose a nominal prediction model $\hat{\nabla}_{vt} \Phi$ is available.
By rearranging \eqref{eq:grad_dyn},
the prediction error $\tilde{\nabla}_{vt} \Phi := \hat{\nabla}_{vt} \Phi - \nabla_{vt} \Phi$ can be viewed as \emph{uncertainty} of the gradient system as
\begin{equation}
\label{eq:grad_dyn_est}
\begin{split}
    \dot{\nabla}_{v} \Phi(t)
    &= \hat{\nabla}_{vt} \Phi(t)
    + \nabla_{vv} \Phi(t) \dot{v}(t)
    - \tilde{\nabla}_{vt} \Phi(t)
    \\
    &=: \hat{\nabla}_{vt} \Phi(t)
    + \nabla_{vv} \Phi(t) (\dot{v}(t) + \sigma(t)),
\end{split}
\end{equation}
where $\sigma(t) := -\nabla_{vv} \Phi^{-1}(t) \tilde{\nabla}_{vt} \Phi(t)$.

We define norm balls and tubes w.r.t. the gradient and the error in optimization variables for the problem statement.
\begin{definition}
     Given $\rho > 0$,
     define the norm balls w.r.t. the gradient and the error in optimization variables as $\mathcal{O}_{\nabla}(t, \rho) := \{v \vert \lVert \nabla_{v} \Phi(t, v) \rVert \leq \rho \}$
     and
     $\mathcal{O}(t, \rho) := \{v \vert \lVert v - v^{\star}(t) \rVert \leq \rho \}$,
     respectively,
     for all $t \geq 0$.
     Also,
     define the tubes w.r.t. the gradient and optimization variables as
     $\mathcal{T}_{\nabla}(\rho) := \cup_{t \geq 0} \mathcal{O}_{\nabla}(t, \rho)$
     and
     $\mathcal{T}(\rho) := \cup_{t \geq 0} \mathcal{O}(t, \rho)$, respectively.
\end{definition}

\noindent\textbf{Problem Statement:}
Given a time-varying optimization problem in \eqref{eq:log_barrier},
the objective is to design the rate of the optimization variables, $\dot{v}(t)$,
that guarantees a performance bound with pre-computable tube width $\rho > 0$,
such that $v(t) \in \mathcal{O}_{\nabla}(t, \rho) \subset \mathcal{T}_{\nabla}(\rho), \forall t \geq 0$  in the presence of the prediction error.

In the following,
we introduce two methods related to $\mathcal{L}_{1}$-AO:
i) continuous-time prediction-correction interior-point method
and ii) $\mathcal{L}_{1}$ adaptive control.

\subsection{Prediction-correction Interior-point method}
In~\cite{fazlyabPredictionCorrectionInteriorPointMethod2018},
a continuous-time prediction-correction interior-point (PCIP) method was proposed as follows:
\begin{equation}
\label{eq:ip_tvopt}
    \dot{v}(t)
    = -\nabla_{vv}\Phi(t)^{-1} \left(
        \nabla_{vt} \Phi(t)
        + P \nabla_{v} \Phi(t)
    \right),
\end{equation}
with $v(0) = v_{0}$,
where $P \succeq \beta I_{n_{v}}$ is the optimizer gain with $\beta > 0$, which determines the convergence rate.
The terms $\nabla_{vt}\Phi$ and $\nabla_{v} \Phi$ correspond to the \textit{prediction} and \textit{correction} terms,
which can be interpreted as \textit{feedforward} and \textit{feedback} terms in control~\cite{fazlyabPredictionCorrectionInteriorPointMethod2018}.
The prediction term tracks the changes in cost function over time,
and the correction term pushes the optimization variables to the optimal solution.
The update law forms a closed-loop dynamics
$\dot{\nabla}_{v} \Phi = - P \nabla_{v} \Phi$.
Considering the Lyapunov function $V(\nabla_{v}\Phi) := \frac{1}{2} \nabla_{v}\Phi^{\intercal} \nabla_{v} \Phi$,
it can be shown that $\dot{V} \leq - 2\beta V$ under the PCIP update law,
which implies the exponential convergence of the gradient to the origin~\cite{fazlyabPredictionCorrectionInteriorPointMethod2018}.

For online implementation,
a modified PCIP was proposed in~\cite{fazlyabPredictionCorrectionInteriorPointMethod2018}: if a prediction model with globally uniformly bounded prediction error is available,
i.e., $\lVert \tilde{\nabla}_{vt} \Phi (t, v) \rVert \leq \eta, \forall (t, v)$ for some $\eta > 0$,
the modified PCIP update law is given by
\begin{equation}
\label{eq:ip_tvopt_modified}
    \dot{v}(t)
    =
    -\nabla_{vv}^{-1}\Phi(t) \left(
        \hat{\nabla}_{vt} \Phi(t)
        + \frac{P\nabla_{v} \Phi(t)}{\max(\lVert \nabla_{v} \Phi(t) \rVert, \epsilon)}
    \right),
\end{equation}
with $v(0) = v_{0}$. The inequalities
$\beta > \eta $ and $\epsilon > 0$ imply
$\lim_{t \to \infty} \lVert \nabla_{v} \Phi (t, v) \rVert \leq \eta \epsilon / \beta$~\cite{fazlyabPredictionCorrectionInteriorPointMethod2018}.
The modified PCIP may suffer from several potential issues:
i) high gain is required to reduce the asymptotic bound,
which is undesirable for numerical stability,
and ii) the assumption that the prediction error is globally uniformly bounded may be unrealistic.

\subsection{\texorpdfstring{$\mathcal{L}_{1}$}{L1} Adaptive Control}
Consider the following system dynamics:
\begin{equation} \label{eq:main}
    \dot{x}(t) = f(x(t))
    + B(x(t)) ( u(t) + \sigma(t, x(t))),
\end{equation}
with $x(0)=x_{0}$,
where $x: \mathbb{R}_{\geq 0} \to X \subset \mathbb{R}^{n_{x}}$
and $u: \mathbb{R}_{\geq 0} \to U \subset \mathbb{R}^{n_{u}}$ denote the state and control input, respectively,
$f: X \to \mathbb{R}^{n_{x}}$
and $B: X \to \mathbb{R}^{n_{x} \times n_{u}}$ are known, 
and $\sigma: \mathbb{R}_{\geq 0} \times X \to \mathbb{R}^{n_{u}}$ is the matched uncertainty.
We assume $B(x)$ has full row rank for all $x \in X$.
We further assume the existence of a \emph{baseline control input} $u_{b}(t)$ such that $0 \in \mathbb{R}^{n_x}$ is a globally exponentially stable equilibrium of~\eqref{eq:main} with $u(t)=u_b(t)$ and $\sigma \equiv 0$.
In the presence of the uncertainty $\sigma \neq 0$, the input $u(t) = u_b(t)$ may not be able to stabilize~\eqref{eq:main}, leading to an unpredictable and unquantifiable divergence of $x(t)$.  
Hence, we design the input $u(t)$ as $u(t) = u_b(t) + u_a(t)$, where $u_a(t)$ denotes the \ellone adaptive control input.
The \ellone adaptive control (\ellone-AC) is a robust adaptive control methodology that decouples the adaptation loop from the control loop, which enables arbitrarily fast adaptation with guaranteed robustness margins~\cite{hovakimyan2010}.
The \ellone-AC has been successfully implemented
on NASA AirSTAR sub-scale aircraft~\cite{gregory_l1_2009}
and Calspan's variable-stability Learjet~\cite{ackerman_evaluation_2017}.
The \ellone-AC input $u_a(t)$ is defined through the following three components:
i) state predictor,
ii) uncertainty estimator (adaptation law),
and iii) low-pass filter.
Fig. \ref{fig:l1ac} illustrates the architecture of the \ellone-AC.
The state predictor is governed by the following equation:
\begin{equation}
\label{eq:l1ac_state_predictor}
    \dot{\hat{x}}(t)
    = A_{s} \tilde{x}(t)
    + f(x(t))
    + B(x(t)) (
    u(t) + \hat{\sigma}(t)
    ),
\end{equation}
with $\hat{x}(0) = x_{0}$,
where $A_{s} \in \mathbb{R}^{n_{x} \times n_{x}}$
is a
Hurwitz matrix,
$\hat{x}: \mathbb{R}_{\geq 0} \to \mathbb{R}^{n_{x}}$ is the predictor state,
$\hat{\sigma}: \mathbb{R}_{\geq 0} \to \mathbb{R}^{n_{u}}$ is the estimated uncertainty,
and $\tilde{x} := \hat{x} - x$.
The estimated uncertainty $\hat{\sigma}(t)$ can be updated using adaptation laws such as projection-based adaptation or piecewise-constant adaptation~\cite{hovakimyan2010,wuMathcal_1Quad2023a}.
The adaptive control input is then given by $u_{\text{a}}(s) = C(s) (-\hat{\sigma}(s))$,
where $C(s)$ is a low-pass filter with \emph{bandwidth} $\omega$ and satisfies $C(0)=1$.
The \ellone-AC input ensures  the existence of an \emph{a priori} $\rho > 0$ such that the state $x(t)$ of the uncertain system~\eqref{eq:main} with input $u(t) = u_b(t) + u_a(t)$ satisfies the \emph{uniform} bound $\lVert x \rVert_{\mathcal{L}_\infty} \leq \rho$.
The value of the bound $\rho > 0$ is determined by the choice of the filter bandwidth and the sampling period, $\omega$ and $T_s$, respectively. 
Further details on the design of \ellone-AC can be found in~\cite{hovakimyan2010,wuMathcal_1Quad2023a,wang_l1_2012,lakshmananSafeFeedbackMotion2020a}.

\begin{figure}
    \centering
    \subfigure[$\mathcal{L}_{1}$-AC]{\includegraphics[width=0.34\textwidth]{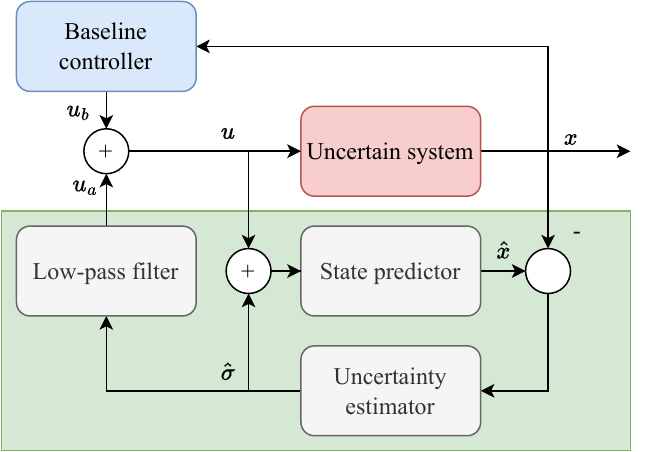}\label{fig:l1ac}}
    \subfigure[$\mathcal{L}_{1}$-AO (proposed)]{\includegraphics[width=0.37\textwidth]{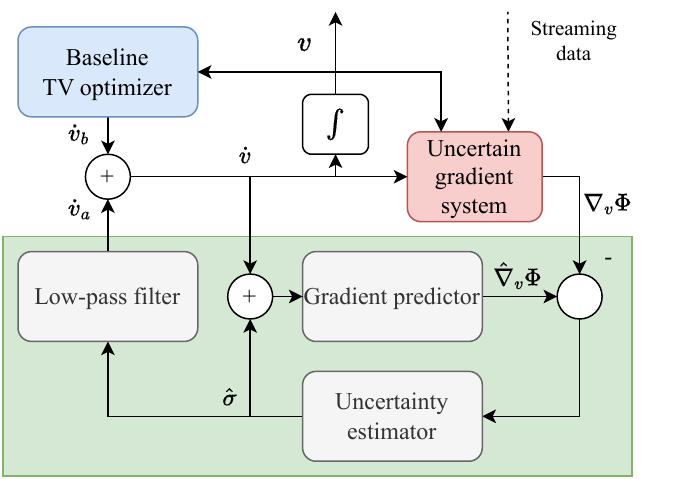}\label{fig:l1ao}}
    \caption{
    Architecture comparison between $\mathcal{L}_{1}$ adaptive control ($\mathcal{L}_{1}$-AC)
    and the proposed $\mathcal{L}_{1}$ adaptive optimization ($\mathcal{L}_{1}$-AO).
    Both architectures are designed to compensate for the uncertainty to recover the performance of the baseline controller or optimizer.
    (a) In $\mathcal{L}_{1}$-AC,
    the state predictor predicts the state of the uncertain (physical) system.
    (b) In $\mathcal{L}_{1}$-AO, the \textit{gradient system} (a virtual system) is considered.
    The rate of optimization variables, $\dot{v}$, is regarded as the control input,
    and the gradient predictor is used to predict the gradient of the cost function.
    }
    \label{fig:architectures}
\end{figure}
\section{\texorpdfstring{$\mathcal{L}_{1}$}{L1} Adaptive Optimization}
\label{sec:l1ao}
The main observation from \autoref{sec:preliminaries} is that a TV optimization problem with an inaccurate prediction model can be interpreted as an output regulation problem of an uncertain nonlinear system, where the output is the gradient.
This interpretation allows us to derive an adaptive TV optimization algorithm based on robust adaptive control.

\subsection{Algorithm and Architecture}
We propose an adaptive method for online TV optimization, termed {\it $\mathcal{L}_{1}$ adaptive optimization} ($\mathcal{L}_{1}$-AO).
We assume the existence of a \textit{baseline update law} $\dot{v}_{b}(t)$ such that the update law $\dot{v}(t) = \dot{v}_{b}(t)$ regulates the gradient to the origin in the absence of uncertainty (\eqref{eq:grad_dyn_est} with $\sigma \equiv 0$) as follows.
\begin{assumption}
\label{assumption:baseline_optimizer}
    Suppose there exists a Lyapunov function
    $V$ such that the following holds for the nominal gradient dynamics
    (\eqref{eq:grad_dyn_est} with $\sigma \equiv 0$)
    under the baseline update law $\dot{v}(t) = \dot{v}_{b}(t)$:
    \begin{equation}
    \label{eq:baseline_condition}
        \underline{\alpha} \lVert \nabla_{v}\Phi \rVert^{2}
        \leq V(\nabla_{v} \Phi)
        \leq \overline{\alpha} \lVert \nabla_{v}\Phi \rVert^{2},
        \quad
        \dot{V}
        \leq - 2\beta V,
    \end{equation}
    for some positive numbers $\underline{\alpha}$, $\overline{\alpha}$, and $\beta$.
\end{assumption}
The $\mathcal{L}_{1}$-AO consists of the following three components:
i) gradient predictor,
ii) uncertainty estimator (adaptation law),
and iii) low-pass filter.
Adopting the piece-wise constant adaptation law~\cite{wuMathcal_1Quad2023a},
each component can be described as
\begin{align*}
    \dot{\hat{\nabla}}_{v} \Phi(t)
    &= A_{s} \tilde{\nabla}_{v} \Phi(t)
    + \hat{\nabla}_{vt} \Phi(t)
    + \nabla_{vv} \Phi(t) \dot{v}(t) + h(t),
    \\
    h(i T_{s})
    &= \mu (A_{s}, T_{s}) \tilde{\nabla}_{v} \Phi(i T_{s}), \quad \forall i \in \mathbb{N}
    \\
    \hat{\sigma}(i T_{s})
    &= \nabla_{vv}\Phi^{-1}(i T_{s})
    h(i T_{s}), \quad \forall i \in \mathbb{N}
    \stepcounter{equation}\tag{\theequation}\label{eq:l1ao_alg}
    \\
    \dot{v}_{\text{a}}(s)
    &= C(s) (-\hat{\sigma}(s)),
\end{align*}
with $\hat{\nabla}_{v} \Phi(0) = \nabla_{v} \Phi(0, v_{0})$,
$h(0) = \hat{\sigma}(0) = 0$,
and $\dot{v} = \dot{v}_{b} + \dot{v}_{a}$,
where
\begin{equation}
    \mu(A_{s}, T_{s})
    := (A_{s}^{-1} (I - e^{A_{s} T_{s}}))^{-1} e^{A_{s} T_{s}},
\end{equation}
and $h(t) := h(i T_{s})$ and $\hat{\sigma}(t) := \hat{\sigma}(i T_{s}), \forall t \in [i T_{s}, (i+1) T_{s})$ for any $i \in \mathbb{N}_{0}$, represent the estimation of $\nabla_{vv} \Phi(t) \sigma(t)$ and $\sigma(t)$, respectively.
$A_{s} \in \mathbb{R}^{n_{v} \times n_{v}}$ is a diagonal Hurwitz matrix,
$T_{s} > 0$ is the sampling period of the adaptation,
and $\tilde{\nabla}_{v} \Phi := \hat{\nabla}_{v} \Phi - \nabla_{v} \Phi$.
Consider a low-pass filter $C(s) = \frac{\omega}{s+\omega}$ with a filter parameter $\omega > 0$.
From \eqref{eq:grad_dyn_est} and \eqref{eq:l1ao_alg},
the prediction error dynamics are:
\begin{equation}
\label{eq:prediction_error_dynamics}
    \dot{\tilde{\nabla}}_{v} \Phi(t)
    = A_{s} \tilde{\nabla}_{v} \Phi(t)
    + h(t) - \nabla_{vv} \Phi(t) \sigma(t).
\end{equation}
Figure \ref{fig:l1ao} illustrates the architecture of the $\mathcal{L}_{1}$-AO.

\subsection{Performance and Optimality Analysis}
In this section,
the performance and optimality of the proposed $\mathcal{L}_{1}$-AO algorithm in \eqref{eq:l1ao_alg} are analyzed.
\ifisnotpreprint
Proofs can be found in the extended version~\cite{kim2024mathcall1adaptiveoptimizeruncertain}.
\else
\fi

For arbitrarily chosen $\epsilon > 0$, let
    $\rho := \sqrt{\overline{\alpha}/\underline{\alpha}} \Vert \nabla_{v} \Phi(0) \Vert + \epsilon$.
Next, we assume the following regularity condition.
\begin{assumption}
\label{assumption:tube}
    Suppose that there exist positive numbers $\Delta_{(\cdot)}$'s
    such that the following inequalities hold:
    \begin{align*}
        \stepcounter{equation}\tag{\theequation}\label{eq:v_dot_b_condition}
        \left\lVert \dot{v}_{b} (t, v) \right\rVert
        &\leq \Delta_{\dot{v}_{b}},              
        \left\lVert \frac{\partial V(\nabla_{v}\Phi(t, v))}{\partial \nabla_{v}\Phi} \right\rVert
        \leq \Delta_{\frac{\partial V}{\partial \nabla_{v}\Phi}},    
        \\
        \left\lVert \hat{\nabla}_{vt} \Phi(t, v) \right\rVert
        &\leq \Delta_{\hat{\nabla}_{vt}\Phi},
        \quad
        \left\lVert \tilde{\nabla}_{vt} \Phi(t, v) \right\rVert
        \leq \Delta_{\tilde{\nabla}_{vt}\Phi},
        \\
        \left\lVert \tilde{\nabla}_{vtt} \Phi(t, v) \right\rVert
        &\leq \Delta_{\tilde{\nabla}_{vtt}\Phi},
        \quad
        \left\lVert \tilde{\nabla}_{vvt} \Phi(t, v) \right\rVert
        \leq \Delta_{\tilde{\nabla}_{vvt}\Phi},
        \\       
        \left\lVert \nabla_{vv} \Phi (t, v) \right\rVert
        &\leq \Delta_{\nabla_{vv}\Phi},
        \quad
        \left\lVert \nabla_{vvt} \Phi(t, v) \right\rVert
        \leq \Delta_{\nabla_{vvt} \Phi},       
        \\
        \left\lVert \nabla_{vvv} \Phi(t, v) \right\rVert
        &\leq \Delta_{\nabla_{vvv} \Phi},       
    \end{align*}
    for all $t \geq 0$ and $ v \in \mathcal{T}_{\nabla}(\rho)$.
\end{assumption}
Based on the above assumption, let
\begin{align}
    \Delta_{\sigma}
    &:= m_{f}^{-1} \Delta_{\tilde{\nabla}_{vt} \Phi},
    \\
    \Delta_{\hat{\sigma}}
    &:=\sqrt{n_{v}} m_{f}^{-1} \Delta_{\nabla_{vv}\Phi} \Delta_{\sigma},
    \\
    \Delta_{\dot{v}}
    &:= \Delta_{\dot{v}_{b}} + \Delta_{\hat{\sigma}},
    \\
    \Delta_{\dot{\nabla}_{vv}\Phi}
    &:= \Delta_{\nabla_{vvt} \Phi} + \Delta_{\nabla_{vvv} \Phi} \Delta_{\dot{v}},
    \\
    \Delta_{\dot{\tilde{\nabla}}_{vt}\Phi}
    &:= \Delta_{\tilde{\nabla}_{vtt} \Phi} + \Delta_{\tilde{\nabla}_{vvt} \Phi} \Delta_{\dot{v}},
    \\
    \Delta_{\dot{\sigma}}
    &:= m_{f}^{-1} \left( \Delta_{\dot{\nabla}_{vv}\Phi}
    m_{f}^{-1} \Delta_{\tilde{\nabla}_{vt} \Phi}
    + \Delta_{\dot{\tilde{\nabla}}_{vv} \dot{\Phi}}
    \right),
    \\
    \zeta_{1}(\omega)
    &:= \Delta_{\frac{\partial V}{\partial \nabla_{v} \Phi}}
    \Delta_{\nabla_{vv} \Phi}
    \left(
        \frac{\Delta_{\sigma}}{\lvert 2\beta - \omega \rvert}
        + \frac{\Delta_{\dot{\sigma}}}{2\beta \omega}
    \right),
\end{align}
\begin{equation}
\begin{split}
     \zeta_{2}(A_{s})
    := 
 m_{f}^{-1} \sqrt{n_{v}}
 \Big(&
    (2\Delta_{\dot{\sigma}} + \underline{\lambda}(-A_{s}) \Delta_{\sigma} )
    \Delta_{\nabla_{vv} \Phi}
    \\
    &+
    \Delta_{\sigma}
    \Delta_{\dot{\nabla}_{vv} \Phi}
\Big),
\end{split}
\end{equation}
\begin{align}
    \zeta_{3}(\omega)
    &:= \Delta_{\sigma} \omega,
    \\
    \zeta_{4}(A_{s}, \omega)
    &:= \Delta_{\frac{\partial V}{\partial \nabla_{v} \Phi}}
    \Delta_{\nabla_{vv} \Phi}
    (\zeta_{2}(A_{s}) + \zeta_{3}(\omega))/(2\beta).
\end{align}

Choose the sampling time $T_{s} > 0$ and the filter bandwidth $\omega > 0$ such that the following inequalities hold:
\begin{equation}
\label{eq:conditions_sampling_time_and_filter_bandwidth}
\begin{split}
    \underline{\alpha} \rho^{2}
    &> \zeta_{1}(\omega) + V_{0},
    \\
    T_{s}
    &\leq (\underline{\alpha}\rho^{2} - \zeta_{1}(\omega) - V_{0}) / \zeta_{4},
\end{split}
\end{equation}
where $V_{0} := V(\nabla_{v}\Phi(0, v(0)))$.
Note that $\zeta_{1}(\omega) \to +0$ as $\omega \to +\infty$.
Therefore, from the definition of $\rho$ and \autoref{assumption:baseline_optimizer},
the conditions in \eqref{eq:conditions_sampling_time_and_filter_bandwidth} can be satisfied by choosing sufficiently large $\omega$ and small $T_{s}$.

We now present the main result.
\begin{theorem}
    \label{thm:l1ao}
    Suppose \autoref{assumption:baseline_optimizer}, \autoref{assumption:tube}, and \eqref{eq:conditions_sampling_time_and_filter_bandwidth} hold.
    Then, under the $\mathcal{L}_{1}$ adaptive optimization algorithm in \eqref{eq:l1ao_alg},
   we have
    \begin{equation}
    v(t)
    \in \mathcal{O}_{\nabla}(t, \rho)
    \subset \mathcal{O}(t, \rho / m_{f}), \forall t \geq 0.
    \end{equation}
    Furthermore, for any $t_{1} > 0$, we have
    \begin{equation}
    \label{eq:UUB}
    \begin{aligned}
    v(t)
    &\in \mathcal{O}_{\nabla} \left(
    t, \sqrt{
    (e^{-2\beta t_{1}} V_{0} + \zeta_{1} + \zeta_{4} T_{s})
    / \underline{\alpha}}
    \right)
    \\
    &\subset
    \mathcal{O} \left(
    t, m_{f}^{-1} \sqrt{
    (e^{-2\beta t_{1}} V_{0} + \zeta_{1} + \zeta_{4} T_{s})
    / \underline{\alpha}}
    \right),
    \end{aligned}
    \end{equation}
    for all $t \geq t_{1}$.
\end{theorem}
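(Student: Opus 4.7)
My plan is to treat the closed-loop gradient system \eqref{eq:grad_dyn_est} under $\dot v = \dot v_b + \dot v_a$ as an $\mathcal{L}_1$-adaptive-control problem on the virtual gradient ``state'' and to reuse the standard filter/sampling decomposition. Because the baseline already reproduces the nominal dynamics to which \autoref{assumption:baseline_optimizer} applies, differentiating the Lyapunov function along \eqref{eq:grad_dyn_est} yields
\[
\dot V(t) \;\leq\; -2\beta\,V(t) \;+\; \left(\tfrac{\partial V}{\partial \nabla_v\Phi}\right)^{\!\!\top}\!\!\nabla_{vv}\Phi(t)\,\bigl(\dot v_a(t) + \sigma(t)\bigr),
\]
and by \autoref{assumption:tube} the cross term is at most $\Delta_{\frac{\partial V}{\partial \nabla_v\Phi}}\Delta_{\nabla_{vv}\Phi}\,\|\dot v_a + \sigma\|$. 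Everything therefore reduces to a uniform bound on $\|\dot v_a + \sigma\|$.

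I would obtain that bound from the $\mathcal{L}_1$ decomposition $\dot v_a + \sigma = (1 - C(s))\sigma + C(s)(\sigma - \hat\sigma)$. The filter residual $(1 - C(s))\sigma$ is handled by combining $\|\sigma\|_\infty \leq \Delta_\sigma$ and $\|\dot\sigma\|_\infty \leq \Delta_{\dot\sigma}$ with the fact that the Lyapunov ODE convolves this residual against $e^{-2\beta(t-\tau)}$; the mixing of the two exponentials $e^{-2\beta t}$ and $e^{-\omega t}$ produces exactly the coefficients $|2\beta-\omega|^{-1}$ and $(2\beta\omega)^{-1}$ in $\zeta_1(\omega)$. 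For the estimation residual $C(s)(\sigma - \hat\sigma)$, I would integrate the prediction error dynamics \eqref{eq:prediction_error_dynamics} on each interval $[iT_s,(i{+}1)T_s)$: the gain $\mu(A_s,T_s) = (A_s^{-1}(I - e^{A_s T_s}))^{-1} e^{A_s T_s}$ is engineered so that $\hat\sigma$ would reconstruct $\sigma$ exactly if $\sigma$ were constant on the interval, whence $\|\sigma - \hat\sigma\|$ only picks up the intra-interval variation, with factors $\Delta_{\dot\sigma}$, $\underline\lambda(-A_s)$, $\Delta_{\nabla_{vv}\Phi}$ and $\Delta_{\dot{\nabla}_{vv}\Phi}$ that regroup into $\zeta_2(A_s)+\zeta_3(\omega)$ and ultimately $\zeta_4$.

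The resulting differential inequality then integrates (via the integrating factor $e^{2\beta t}$) to
\[
V(t) \;\leq\; e^{-2\beta t}\,V_0 + \zeta_1(\omega) + \zeta_4\, T_s, \qquad t \geq 0,
\]
and \eqref{eq:conditions_sampling_time_and_filter_bandwidth} is precisely the statement that the right-hand side is strictly below $\underline\alpha\,\rho^2$. Invariance is closed by a continuity/bootstrap argument: any first exit time $t^\star$ from $\mathcal{T}_\nabla(\rho)$ would force $V(t^\star) = \underline\alpha\rho^2$, yet on $[0,t^\star]$ the bounds of \autoref{assumption:tube} are available and the estimate above yields $V(t^\star) < \underline\alpha\rho^2$, a contradiction. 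Hence $v(t) \in \mathcal{O}_\nabla(t,\rho)$ for all $t\geq 0$. The inclusion $\mathcal{O}_\nabla(t,\rho) \subset \mathcal{O}(t,\rho/m_f)$ is immediate from strong convexity: writing $\nabla_v\Phi(t,v) = \int_0^1 \nabla_{vv}\Phi\bigl(t, v^\star + s(v-v^\star)\bigr)\,(v-v^\star)\,ds$ and using $\nabla_{vv}\Phi \succeq m_f I_{n_v}$ gives $\|\nabla_v\Phi(t,v)\| \geq m_f\,\|v - v^\star(t)\|$. The UUB refinement \eqref{eq:UUB} is the same integrated bound evaluated at $t \geq t_1$.

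The main obstacle I foresee is the circularity inside \autoref{assumption:tube}: the constants $\Delta_\sigma$, $\Delta_{\dot\sigma}$, $\Delta_{\dot v}$, $\Delta_{\dot{\nabla}_{vv}\Phi}$ that drive $\zeta_1$ and $\zeta_4$ are only postulated on the very tube $\mathcal{T}_\nabla(\rho)$ I am trying to prove invariant. The bootstrap resolves this only because the dependency chain $\Delta_\sigma \to \Delta_{\hat\sigma} \to \Delta_{\dot v} \to \Delta_{\dot{\nabla}_{vv}\Phi} \to \Delta_{\dot\sigma}$ is acyclic by construction and because the strict inequality in \eqref{eq:conditions_sampling_time_and_filter_bandwidth} preserves the slack required to rule out the hypothetical exit time. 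Verifying this strict-inequality margin carefully — and ensuring that the piecewise-constant adaptation really yields the advertised $O(T_s)$ reconstruction error on a \emph{closed} tube — is where I would spend most of the effort.
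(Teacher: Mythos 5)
Your proposal follows essentially the same route as the paper: the Lyapunov derivative bound $\dot V \leq -2\beta V + \nu^{\intercal}(\sigma + \dot v_a)$, the split of $\sigma + \dot v_a$ into the filter residual $(1-C(s))\sigma$ (giving $\zeta_1$) and the estimation residual $C(s)(\sigma-\hat\sigma)$ (giving $\zeta_4 T_s$ via the piecewise-constant reconstruction error on each sampling interval), the comparison-lemma integration, the first-exit-time contradiction against \eqref{eq:conditions_sampling_time_and_filter_bandwidth}, and the mean-value/strong-convexity inclusion $\mathcal{O}_{\nabla}(t,\rho)\subset\mathcal{O}(t,\rho/m_f)$ are all exactly the paper's steps. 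The only nitpick is that at the exit time one gets $V(t^{\star}) \geq \underline{\alpha}\rho^{2}$ rather than equality, but the contradiction goes through unchanged.
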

\ifisnotpreprint
\else
\begin{proof}
    See Appendix \ref{sec:proof_of_tube}.
\end{proof}
\fi
The main theorem implies that the gradient and the error in optimization variables are uniformly bounded for all $t \geq 0$ and uniformly ultimately bounded for all $t \geq t_{1}$ for any $t_{1} > 0$.
The following corollary shows the boundedness of the optimality gap.
\begin{corollary}
\label{corollary:cost_bound}
    Under the assumptions of \autoref{thm:l1ao},  for all $t \geq 0$
   we have:
    \begin{equation}
        0 \leq \Phi(t, v(t)) - \Phi(t, v^{\star}(t)) \leq \rho^{2} / m_{f}.
    \end{equation}
    Also, for any $t_{1} > 0$, we have for all $t \geq t_{1}$:
    \begin{equation}
        0 \leq \Phi(t, v(t)) - \Phi(t, v^{\star}(t)) \leq \frac{e^{-2\beta t_{1}} V_{0} + \zeta_{1} + \zeta_{4} T_{s}}{m_{f} \underline{\alpha}}.
    \end{equation}
\end{corollary}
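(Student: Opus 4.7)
The plan is to derive both bounds as direct algebraic consequences of \autoref{thm:l1ao} together with the strong convexity of $\Phi(t, \cdot)$; no additional Lyapunov analysis or adaptation-law estimates are required.

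For the lower bound, I would simply invoke the definition $v^{\star}(t) = \argmin_{v} \Phi(t, v)$, which gives $\Phi(t, v(t)) \geq \Phi(t, v^{\star}(t))$ for every $t \geq 0$, hence $\Phi(t, v(t)) - \Phi(t, v^{\star}(t)) \geq 0$.

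For the upper bound, the main tool is the first-order convexity inequality applied at the points $v^{\star}(t)$ and $v(t)$,
\[
\Phi(t, v^{\star}(t)) \geq \Phi(t, v(t)) + \nabla_{v}\Phi(t, v(t))^{\intercal} (v^{\star}(t) - v(t)),
\]
which, after rearrangement and the Cauchy--Schwarz inequality, yields
\[
\Phi(t, v(t)) - \Phi(t, v^{\star}(t)) \leq \lVert \nabla_{v}\Phi(t, v(t)) \rVert \cdot \lVert v(t) - v^{\star}(t) \rVert.
\]
From \autoref{thm:l1ao}, one has $\lVert \nabla_{v}\Phi(t, v(t)) \rVert \leq \rho$ and $\lVert v(t) - v^{\star}(t) \rVert \leq \rho / m_{f}$ for all $t \geq 0$; multiplying these bounds immediately gives the first claim $\Phi(t, v(t)) - \Phi(t, v^{\star}(t)) \leq \rho^{2}/m_{f}$. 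The second claim is obtained identically, but with the sharper radii from the UUB statement \eqref{eq:UUB}: for every $t \geq t_{1}$, both $\lVert \nabla_{v}\Phi(t, v(t)) \rVert$ and $m_{f} \lVert v(t) - v^{\star}(t) \rVert$ are bounded by $\sqrt{(e^{-2\beta t_{1}} V_{0} + \zeta_{1} + \zeta_{4} T_{s}) / \underline{\alpha}}$, and multiplying these bounds yields the claimed expression.

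There is no genuine obstacle in this argument; the only thing worth checking is that the radii of the gradient tube and the optimization-variable tube in \autoref{thm:l1ao} are in exactly the ratio $1 : 1/m_{f}$, so that their product reduces cleanly to $\rho^{2}/m_{f}$ rather than to a larger composite expression. This ratio is already built into the statement of \autoref{thm:l1ao} via strong convexity, since $\nabla_{v}\Phi(t, v^{\star}(t)) = 0$ combined with $\nabla_{vv}\Phi \succeq m_{f} I_{n_{v}}$ and the mean-value theorem gives $\lVert v(t) - v^{\star}(t) \rVert \leq \lVert \nabla_{v}\Phi(t, v(t)) \rVert / m_{f}$.
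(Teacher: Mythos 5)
Your proposal is correct and matches the paper's own proof essentially verbatim: the lower bound from optimality of $v^{\star}(t)$, the upper bound from the first-order convexity inequality followed by Cauchy--Schwarz, and then substitution of the gradient and tracking-error bounds from \autoref{thm:l1ao} (including the UUB radii for the $t \geq t_{1}$ case). Your closing remark about the $1 : 1/m_{f}$ ratio of the tube radii is exactly the content of \autoref{lemma:ball_tube}, which the theorem already encodes, so no gap remains.
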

\ifisnotpreprint
\else
\begin{proof}
    See Appendix \ref{sec:proof_of_cost_tube}.
\end{proof}
\fi

\begin{remark}
    \autoref{assumption:baseline_optimizer},
    and \eqref{eq:v_dot_b_condition}
    in \autoref{assumption:tube}
    hold for the PCIP method in \eqref{eq:ip_tvopt}
    with
    $\underline{\alpha} = \overline{\alpha} = 1/2$,
    $\beta = \underline{\lambda}(P)$,
    $\Delta_{\dot{v}_{b}} = m_{f}^{-1} (\Delta_{\hat{\nabla}_{vt} \Phi} + \lVert P \rVert \rho )$,
    and $\Delta_{\frac{\partial V}{\partial \nabla_{v} \Phi}} = \rho$.
    The \autoref{assumption:tube} is for $v \in \mathcal{T}_{\nabla}(\rho)$, \textup{not} assuming $v(t) \in \mathcal{T}_{\nabla}(\rho),
    \forall t \geq 0$.
\end{remark}

\section{NUMERICAL SIMULATIONS}
\label{sec:numerical_simulation}
In this section,
numerical simulations are performed for time-varying optimization problems to demonstrate the effectiveness of the proposed method.
\ifisnotpreprint
Simulation settings can be found in the extended version~\cite{kim2024mathcall1adaptiveoptimizeruncertain}.
\else
Simulation settings can be found in Appendix \ref{sec:simulation_settings}.
\fi

\subsection{Example 1: Constrained TV optimization}
\label{sec:example1}
Consider the following TV optimization problem:
\begin{align}
    \min_{v} f_{0}(t, v)
    &= \frac{1}{2} v^{2}
    \\
    \label{eq:sim_constraint}
    \text{subject to} \quad f_{1}(t, v)
    &= v + d(t) \leq 0,
\end{align}
with external streaming data $d(t) = 3 \sin(3t)$.
The problem is approximated by a log-barrier function as follows:
\begin{equation}
    \min_{v} \Phi(t, v)
    = \frac{1}{2} v^{2}
    - \log(-(v + d(t))),
\end{equation}
which implies $\nabla_{vt} \Phi(t, v) = \dot{d}(t)/(v + d(t))^{2}$.
The nominal prediction model assumes that $d(t)$ slowly varies over time, i.e., $\dot{d}(t) \approx 0$, which implies $\hat{\nabla}_{vt} \Phi(t, v) = 0$.

\noindent \textbf{Results}:
The simulation results are illustrated in \autoref{fig:sim_result_ex2}.
As shown in Fig. \ref{fig:sim_pcip},
the modified PCIP method with gain $P=10$ could not compensate for the prediction inaccuracies effectively and violated the inequality constraint, causing early termination of the simulation (denoted by a magenta star).
The same simulation with gain $P=10^{3}$ did not violate the constraint and tracked the optimal solution.
However, this induced an extremely large update rate at the beginning phase,
undesirable for numerical stability.
Note that the modified PCIP method with gains larger than $P = 10^{3}$ caused failures of the numerical integration solvers (immediate termination after one time step).
On the other hand,
as shown in Fig. \ref{fig:sim_l1ao},
the proposed $\mathcal{L}_{1}$-AO successfully tracks the optimal solution without any excessive update rate.
The optimization variable is confined within a green tube $\mathcal{T}(\rho)$ with $\rho = 0.28$,
satisfying the constraint in the presence of prediction inaccuracies.

\autoref{tab:elapsed_time} shows each method's mean elapsed times (total elapsed time divided by the number of time steps).
ECOS~\cite{domahidiECOSSOCPSolver2013}, a time-invariant solver, results in the mean elapsed time larger than the time step $\Delta t=10^{-3}$, which is not applicable to real-time scenarios.
On the other hand, modified PCIP and $\mathcal{L}_{1}$-AO, which are time-varying methods, show significantly small mean elapsed times.
The mean elapsed time of $\mathcal{L}_{1}$-AO is larger than that of modified PCIP due to extra computation for adaptation.
The mean elapsed time of $\mathcal{L}_{1}$-AO is approximately $1/120$ of that of ECOS and $1/50$ of that of time step $\Delta t$, making it
desirable for real-time scenarios.

\begin{figure}[!t]
    \centering
    \subfigure[Modified PCIP]{\includegraphics[width=0.42\textwidth]{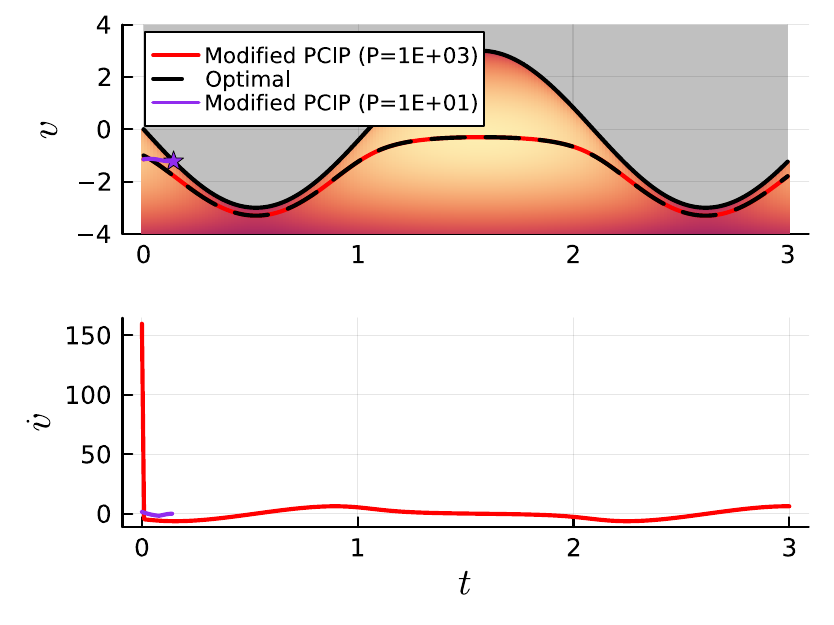}\label{fig:sim_pcip}}
    \subfigure[$\mathcal{L}_{1}$-AO (baseline: PCIP)]{\includegraphics[width=0.42\textwidth]{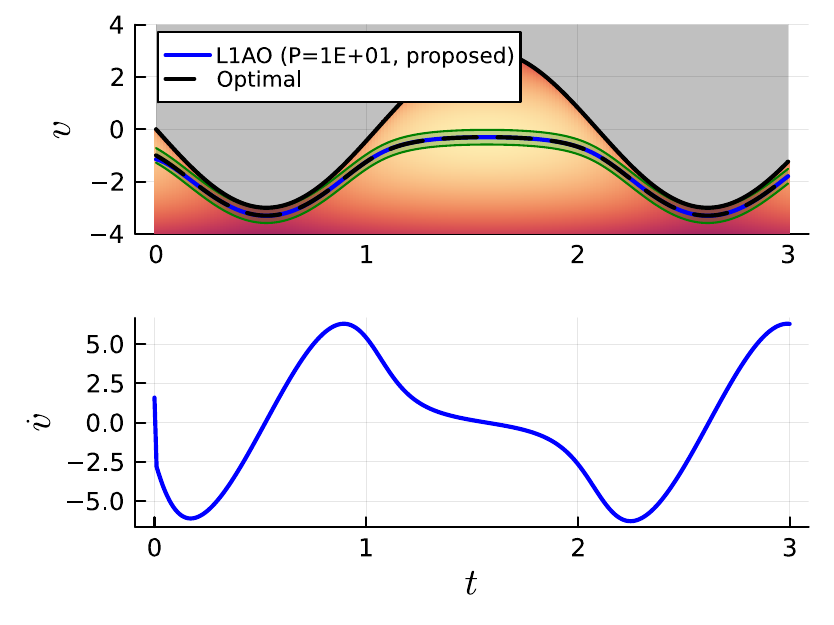}\label{fig:sim_l1ao}}
    \caption{
    Example 1: Simulation result.
    The gray area denotes the TV constraint.
    The heatmap indicates the cost function.
    (a) The modified PCIP with $P=10$ violated the TV constraint (magenta star).
    The modified PCIP with a high gain $P=10^{3}$ induced a huge update rate at the initial phase,
    undesirable for numerical stability.
    (b) The $\mathcal{L}_{1}$-AO updates the optimization variable without an abrupt update.
    The optimization variable is confined within a green tube,
    preventing constraint violation.
    }
    \label{fig:sim_result_ex2}
\end{figure}
\begin{table}[h!]
    \caption{Example 1: Mean elapsed time for a time step}
    \centering
    \begin{tabular}{c|c|c}
    \hline
    ECOS & Modified PCIP$^{*}$ ($P=10^{3}$) & $\mathcal{L}_{1}$-AO (baseline: PCIP) \\
    \hline
    $2.2837$ ms & $0.0063$ ms & $0.0187$ ms
    \\
    \hline
    \multicolumn{3}{l}{%
    \begin{minipage}{8cm}%
        $^{*}$: Modified PCIP with $P=10$ violated constraint
    \end{minipage}%
        }\\ 
    \end{tabular}
    \label{tab:elapsed_time}
\end{table}

\subsection{Example 2: Robot navigation}
\label{sec:example2}
We consider a robot navigation problem borrowed from~\cite[Section IV]{fazlyabPredictionCorrectionInteriorPointMethod2018}.
In the robot navigation task,
a projected goal is used for collision avoidance, which minimizes the following cost function:
\begin{align*}
    \Phi(t, x)
    = \frac{1}{2} \Vert x - x_{d} \Vert^{2} - \frac{1}{c(t)} \sum_{i=1}^{m} \log(b_{i}(x_{c}) - a_{i}(x_{c})^{\intercal} x),
\end{align*}
where $x \in \mathbb{R}^{2}$ is the optimization variable.
The positions of robot and target at time $t$, $x_{c}(t) \in \mathbb{R}^{2}$ and $x_{d}(t) \in \mathbb{R}^{2}$, respectively, can be regarded as streaming data.
Other details can be found in~\cite{fazlyabPredictionCorrectionInteriorPointMethod2018,arslanExactRobotNavigation2016}.
The nominal prediction model assumes that the target is static, i.e., $\dot{x}_{d}(t) \approx 0$.
Optimization algorithms produce the estimate $\hat{x}(t)$ of the projected goal, corresponding to the optimization variable $v(t)$ in this paper.

\noindent \textbf{Results}:
The simulation results are illustrated in~\autoref{fig:sim_2}.
As shown in Fig.~\ref{fig:sim_2}, PCIP exhibited a large tracking error due to the prediction inaccuracies.
Modified PCIP showed relatively low tracking error due to the high gain.
For both PCIP and modified PCIP,
the augmentation of $\mathcal{L}_{1}$-AO significantly improved the tracking performance.
The simulation results demonstrate that the $\mathcal{L}_{1}$-AO can effectively compensate for the prediction inaccuracies in online TV optimization.

\begin{figure}
    \centering
    \subfigure[$t=\frac{1}{4}t_{f}$]{\includegraphics[width=0.44\linewidth]{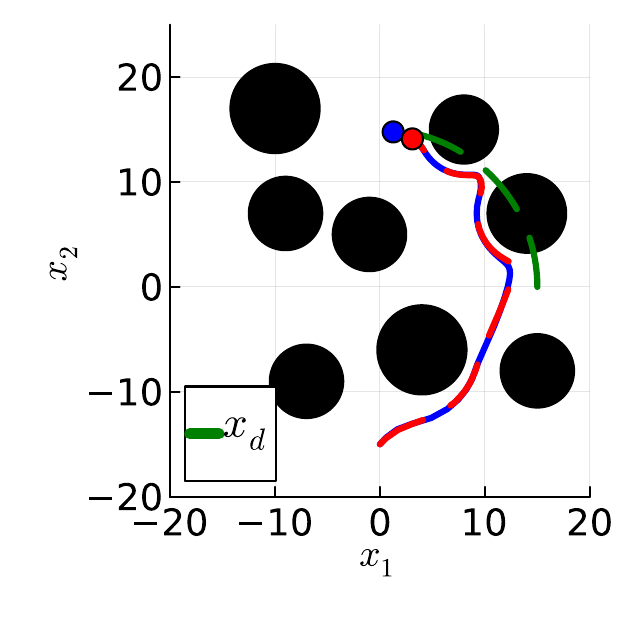}}\label{fig:sim_2_2d_1}
    \subfigure[$t=\frac{2}{4}t_{f}$]{\includegraphics[width=0.44\linewidth]{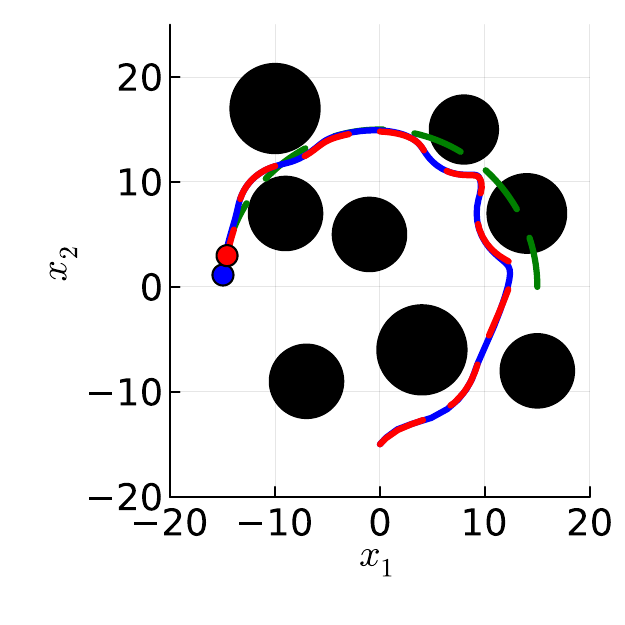}}\label{fig:sim_2_2d_2}
    \subfigure[$t=\frac{3}{4}t_{f}$]{\includegraphics[width=0.44\linewidth]{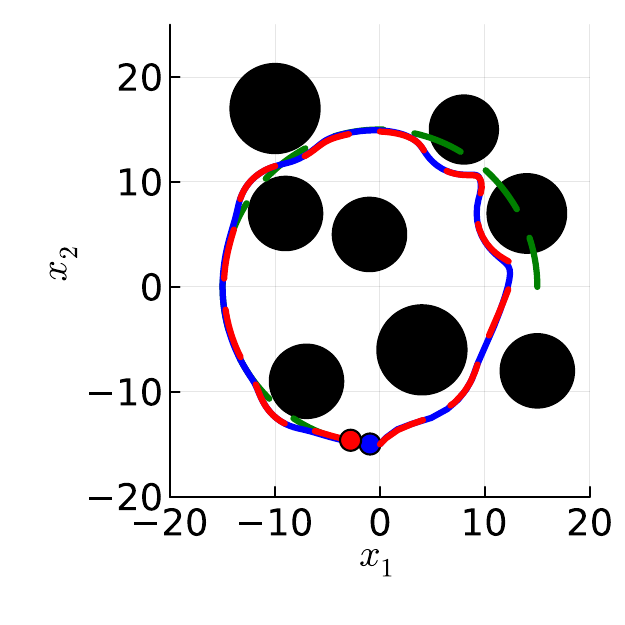}}\label{fig:sim_2_2d_3}
    \subfigure[$t=t_{f}$]{\includegraphics[width=0.44\linewidth]{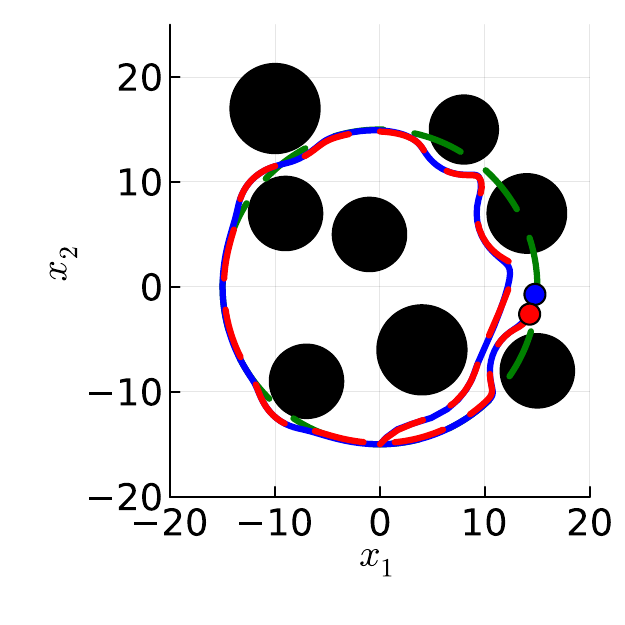}}\label{fig:sim_2_2d_4}
    \subfigure[]{\includegraphics[width=0.44\linewidth]{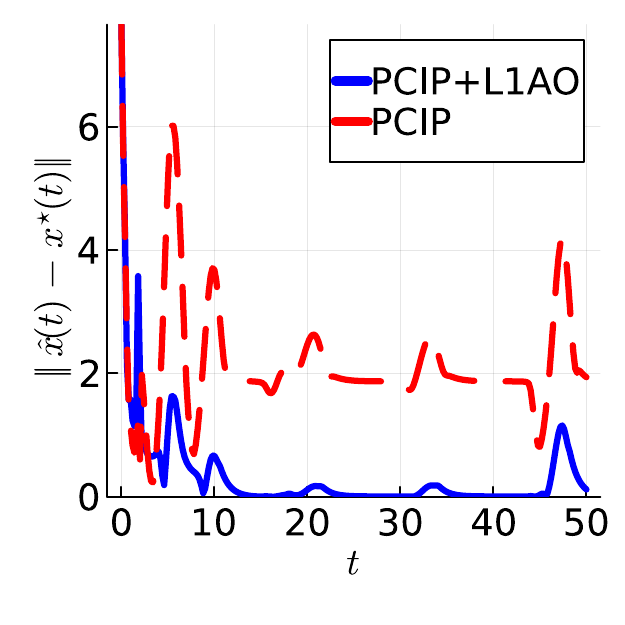}}\label{fig:sim_2_dist_1}
    \subfigure[]{\includegraphics[width=0.44\linewidth]{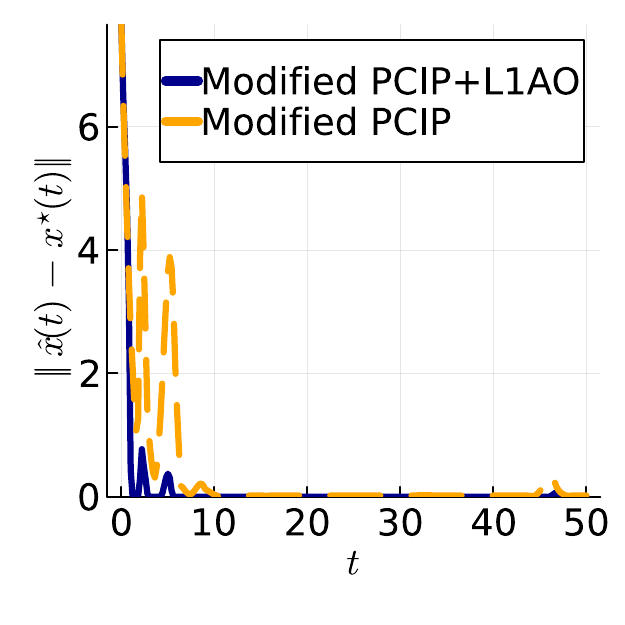}}\label{fig:sim_2_dist_2}
    \caption{
    Example 2: 
    (a)-(d) 2D trajectories of a robot. For visualization, only the results of PCIP and $\mathcal{L}_{1}$-AO with the baseline of PCIP are depicted.
    (e)-(f) Error in optimization variable: (e) PCIP vs $\mathcal{L}_{1}$-AO (baseline: PCIP), (f) Modified PCIP vs $\mathcal{L}_{1}$-AO (baseline: Modified PCIP).
    }
    \label{fig:sim_2}
\end{figure}

\section{CONCLUSIONS}
\label{sec:conclusions}
In this study,
we proposed $\mathcal{L}_{1}$ adaptive optimization ($\mathcal{L}_{1}$-AO),
an adaptive method for online time-varying (TV) convex optimization.
The $\mathcal{L}_{1}$-AO has a baseline TV optimizer, which updates the optimization variables with a prediction model.
The prediction model leverages the underlying temporal structure of TV problems,
allowing efficient TV optimization.
However,
the prediction model can be inaccurate in online implementation,
degrading the performance of TV optimizers.
The $\mathcal{L}_{1}$-AO estimates and compensates for the uncertainty from the prediction inaccuracies,
providing robustness certification with a pre-computable tube width.
Numerical simulation results showed that the proposed $\mathcal{L}_{1}$-AO can solve the TV optimization efficiently and reliably in the presence of prediction inaccuracies compared to existing time-invariant and non-adaptive TV methods.

In TV optimization, the streaming data may be influenced by the decision (optimization variable) through a physical dynamical system,
which forms a closed-loop feedback with the gradient dynamics.
Future work includes
analysis with such feedback,
relaxing strong convexity for a wider class of problems,
and more realistic applications of real-time TV optimization including robot perception and motion planning.

\bibliography{ref}
\bibliographystyle{ieeetr}

\ifisnotpreprint
\else
\appendix

\subsection{Lemmas for \autoref{thm:l1ao}}
In this section, we present lemmas for \autoref{thm:l1ao}.

\begin{lemma}
\label{lemma:ball_tube}
    For all $t \geq 0$ and $\rho >0$,
    $\mathcal{O}_{\nabla}(t, \rho) \subset \mathcal{O}(t, \rho / m_{f})$
    and
    $\mathcal{T}_{\nabla}(\rho) \subset \mathcal{T}(\rho / m_{f})$.
\end{lemma}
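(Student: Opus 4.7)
The plan is to exploit the uniform strong convexity of $\Phi(t,\cdot)$ with modulus $m_{f}$, which gives a Lipschitz-type lower bound on the gradient norm in terms of the distance to the minimizer. Concretely, for a twice-differentiable function $g$ with $\nabla^{2} g \succeq m_{f} I$, the mean value theorem applied to $\nabla g$ along the segment between two points $v$ and $v^{\star}$ yields
\begin{equation*}
  \nabla g(v) - \nabla g(v^{\star})
  = \Bigl( \int_{0}^{1} \nabla^{2} g(v^{\star} + \tau (v - v^{\star})) \, d\tau \Bigr) (v - v^{\star}),
\end{equation*}
so that $\langle \nabla g(v) - \nabla g(v^{\star}), v - v^{\star} \rangle \geq m_{f} \lVert v - v^{\star} \rVert^{2}$, and Cauchy--Schwarz then gives $\lVert \nabla g(v) - \nabla g(v^{\star}) \rVert \geq m_{f} \lVert v - v^{\star} \rVert$.

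I would instantiate this bound with $g = \Phi(t, \cdot)$ and $v^{\star} = v^{\star}(t)$. Because $v^{\star}(t)$ is the unique minimizer, the first-order optimality condition $\nabla_{v} \Phi(t, v^{\star}(t)) = 0$ stated in the preliminaries yields
\begin{equation*}
  \lVert \nabla_{v} \Phi(t, v) \rVert \;\geq\; m_{f} \, \lVert v - v^{\star}(t) \rVert, \qquad \forall (t, v).
\end{equation*}
Therefore, if $v \in \mathcal{O}_{\nabla}(t, \rho)$, i.e.\ $\lVert \nabla_{v} \Phi(t, v) \rVert \leq \rho$, dividing by $m_{f}$ gives $\lVert v - v^{\star}(t) \rVert \leq \rho/m_{f}$, hence $v \in \mathcal{O}(t, \rho/m_{f})$. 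This proves the pointwise inclusion $\mathcal{O}_{\nabla}(t, \rho) \subset \mathcal{O}(t, \rho/m_{f})$.

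Finally, the tube inclusion follows from the elementary fact that unions preserve inclusions: since $\mathcal{O}_{\nabla}(t, \rho) \subset \mathcal{O}(t, \rho/m_{f})$ for every $t \geq 0$, taking the union over $t$ on both sides yields $\mathcal{T}_{\nabla}(\rho) = \bigcup_{t \geq 0} \mathcal{O}_{\nabla}(t, \rho) \subset \bigcup_{t \geq 0} \mathcal{O}(t, \rho/m_{f}) = \mathcal{T}(\rho/m_{f})$. There is no genuine obstacle here; the argument is a direct application of the strong convexity hypothesis already imposed in \autoref{sec:preliminaries}, so the lemma is essentially a bookkeeping step that lets subsequent gradient-norm bounds (such as those in \autoref{thm:l1ao}) be translated into bounds on the error in the optimization variable.
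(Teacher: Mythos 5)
Your proof is correct and follows essentially the same route as the paper: both reduce the claim to the strong-convexity inequality $\lVert \nabla_{v}\Phi(t,v)\rVert \ge m_{f}\lVert v - v^{\star}(t)\rVert$ obtained from a mean-value representation of $\nabla_{v}\Phi(t,v)-\nabla_{v}\Phi(t,v^{\star}(t))$, together with $\nabla_{v}\Phi(t,v^{\star}(t))=0$, and then take unions over $t$ for the tube statement. The only cosmetic difference is that you derive the key inequality via strong monotonicity and Cauchy--Schwarz from the integral form of the Hessian, whereas the paper inverts the Hessian at an intermediate point and uses $\lVert \nabla_{vv}^{-1}\Phi\rVert \le m_{f}^{-1}$; if anything, your integral form sidesteps the minor imprecision of applying the scalar mean-value theorem to a vector-valued map.
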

\begin{proof}
    Using mean-value theorem with respect to the second argument of $\nabla_{v}\Phi(t, v)$ around $v^{\star}(t)$ implies
    \begin{equation*}
    \nabla_{v} \Phi (t, v)
    - \nabla_{v} \Phi (t, v^{\star}(t))
    = \nabla_{vv}\Phi(t, \eta(t)) (v - v^{\star}(t)),
    \end{equation*}
    where $\eta(t)$ is a convex combination of $v$ and $v^{\star}(t)$.
    Then,
    \begin{equation*}
    \begin{split}
        \lVert v - v^{\star}(t) \rVert
        &= \lVert \nabla_{vv}^{-1}\Phi(t, \eta(t)) \nabla_{v} \Phi (t, v) \rVert
        \\
        &\leq \lVert \nabla_{vv}^{-1}\Phi(t, \eta(t)) \rVert \lVert \nabla_{v} \Phi (t, v) \rVert
        \\
        &\leq m_{f}^{-1} \lVert \nabla_{v}\Phi(t, v) \rVert,
    \end{split}
    \end{equation*}
    since $\nabla_{v} \Phi (t, v^{\star}(t)) = 0$
    and $\lVert \nabla_{vv}^{-1}\Phi(t, v) \rVert \leq m_{f}^{-1}$
    from the uniform strong convexity of $\Phi(t, \cdot)$,
    which concludes the proof.   
\end{proof}
\begin{lemma}
\label{lemma:uncertainty_est_bound}
    If $v(t) \in \mathcal{T}_{\nabla}(\rho), \forall t \in [0, \tau]$ for some $\tau > 0$,
    then the following relationships hold:
    \begin{align}
        \label{eq:sigma_hat_bound}
        \lVert \hat{\sigma} \rVert_{\mathcal{L}_{\infty}}^{[0, \tau]}
        &\leq \Delta_{\hat{\sigma}},
        \\
        \label{eq:v_dot_a_bound}
        \lVert \dot{v}_{a} \rVert_{\mathcal{L}_{\infty}}^{[0, \tau]}
        &\leq \Delta_{\hat{\sigma}}.       
        \\
        \label{eq:sigma_dot_bound}
        \lVert \dot{\sigma} \rVert_{\mathcal{L}_{\infty}}^{[0, \tau]}
        &\leq \Delta_{\dot{\sigma}}.
    \end{align}
\end{lemma}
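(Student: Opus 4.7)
The three bounds can be handled in sequence, each leveraging the previous one, so that the overall argument chains together the uniform strong convexity $\nabla_{vv}\Phi \succeq m_f I$, the regularity bounds in \autoref{assumption:tube} (which apply because the hypothesis $v(t) \in \mathcal{T}_\nabla(\rho)$ is in force on $[0,\tau]$), and the specific structure of the piecewise-constant adaptation together with the first-order low-pass filter $C(s) = \omega/(s+\omega)$.

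First I would bound $\sigma$ itself, as a preliminary step: from $\sigma(t) = -\nabla_{vv}\Phi(t)^{-1}\tilde{\nabla}_{vt}\Phi(t)$ and $\|\nabla_{vv}\Phi(t)^{-1}\| \le m_f^{-1}$, Assumption 2 gives $\|\sigma(t)\| \le m_f^{-1}\Delta_{\tilde{\nabla}_{vt}\Phi} = \Delta_\sigma$ pointwise on $[0,\tau]$. Next, for \eqref{eq:sigma_hat_bound}, I would analyze the prediction error dynamics \eqref{eq:prediction_error_dynamics} on each sampling interval $[iT_s,(i+1)T_s)$ using variation of constants. The key algebraic observation is that $\mu(A_s,T_s)=(A_s^{-1}(I-e^{A_sT_s}))^{-1}e^{A_sT_s}$ is precisely the gain that makes the contribution of $h(iT_s)$ over the interval cancel the homogeneous term $e^{A_sT_s}\tilde{\nabla}_v\Phi(iT_s)$, leaving
\begin{equation*}
\tilde{\nabla}_v\Phi((i{+}1)T_s) = -\int_{iT_s}^{(i+1)T_s} e^{A_s((i+1)T_s-\tau)}\nabla_{vv}\Phi(\tau)\sigma(\tau)\,d\tau,
\end{equation*}
with the initialization $\tilde{\nabla}_v\Phi(0)=0$ used to start the induction. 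Bounding this integral with $\|\sigma\|\le\Delta_\sigma$ and $\|\nabla_{vv}\Phi\|\le\Delta_{\nabla_{vv}\Phi}$, then multiplying by $\mu(A_s,T_s)$ to obtain $h(iT_s)$ and by $\nabla_{vv}\Phi(iT_s)^{-1}$ to obtain $\hat{\sigma}(iT_s)$, produces the bound $\Delta_{\hat{\sigma}}=\sqrt{n_v}\,m_f^{-1}\Delta_{\nabla_{vv}\Phi}\Delta_\sigma$ as a uniform bound over sampling instants (and hence over $[0,\tau]$ since $\hat{\sigma}$ is piecewise constant). The $\sqrt{n_v}$ factor will emerge from the coordinate-wise analysis enabled by the diagonal structure of $A_s$, which lets one sharpen the scalar eigenvalue bounds on $e^{A_sT_s}$ and its integral before re-assembling via the $\ell^2$ norm.

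The bound \eqref{eq:v_dot_a_bound} on $\dot v_a$ is then immediate: $\dot v_a(s) = C(s)(-\hat\sigma(s))$ with the first-order low-pass filter $C(s)=\omega/(s+\omega)$ having unit $\mathcal L_1$ norm of its impulse response, so the standard input-output inequality $\|\dot v_a\|_{\mathcal L_\infty}^{[0,\tau]} \le \|C\|_{\mathcal L_1}\|\hat\sigma\|_{\mathcal L_\infty}^{[0,\tau]} \le \Delta_{\hat\sigma}$ closes this part. Finally, for \eqref{eq:sigma_dot_bound}, I would differentiate $\sigma = -\nabla_{vv}\Phi^{-1}\tilde{\nabla}_{vt}\Phi$ using $\tfrac{d}{dt}\nabla_{vv}\Phi^{-1} = -\nabla_{vv}\Phi^{-1}\dot{\nabla}_{vv}\Phi\,\nabla_{vv}\Phi^{-1}$, with $\dot{\nabla}_{vv}\Phi = \nabla_{vvt}\Phi + \nabla_{vvv}\Phi\,\dot v$ and $\dot{\tilde{\nabla}}_{vt}\Phi = \tilde{\nabla}_{vtt}\Phi + \tilde{\nabla}_{vvt}\Phi\,\dot v$. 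Combining the first two sub-bounds gives $\|\dot v\| \le \Delta_{\dot v_b}+\Delta_{\hat\sigma}=\Delta_{\dot v}$, so \autoref{assumption:tube} yields $\|\dot{\nabla}_{vv}\Phi\|\le\Delta_{\dot{\nabla}_{vv}\Phi}$ and $\|\dot{\tilde{\nabla}}_{vt}\Phi\|\le\Delta_{\dot{\tilde{\nabla}}_{vt}\Phi}$, and assembling the terms reproduces $\Delta_{\dot\sigma}$.

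The main obstacle is the piecewise-constant adaptation bound in \eqref{eq:sigma_hat_bound}: one has to carry out the induction across sampling intervals carefully, verify the cancellation produced by $\mu(A_s,T_s)$ (this is what makes the ``initial'' contribution at each step vanish), and keep track of the coordinate-wise exponential factors arising from the diagonal Hurwitz $A_s$ to justify the precise constant $\sqrt{n_v}\,m_f^{-1}\Delta_{\nabla_{vv}\Phi}\Delta_\sigma$. The remaining two bounds are essentially bookkeeping once $\hat\sigma$ is under control.
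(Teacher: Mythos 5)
Your proposal is correct and follows essentially the same route as the paper: the pointwise bound $\lVert\sigma\rVert\le\Delta_\sigma$ from strong convexity, the variation-of-constants cancellation by $\mu(A_s,T_s)$ leaving only the integral of $e^{A_s((i+1)T_s-\tau)}\nabla_{vv}\Phi\,\sigma$ (handled coordinate-wise via the mean-value theorem for diagonal $A_s$, which is where $\sqrt{n_v}$ enters), the unit $\mathcal{L}_1$-gain of $C(s)$ for $\dot v_a$, and differentiation of $\sigma=-\nabla_{vv}\Phi^{-1}\tilde\nabla_{vt}\Phi$ using $\lVert\dot v\rVert\le\Delta_{\dot v}$ for the last bound. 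No gaps of substance.
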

\begin{proof}
Proof of \eqref{eq:sigma_hat_bound}:
It is sufficient to show that $\lVert \hat{\sigma} (i T_{s}) \rVert \leq \Delta_{\hat{\sigma}}, \forall i \in \{0, \ldots, \lfloor \tau/ T_{s} \rfloor \}$.
First, $\hat{\sigma}(0) = 0$.
Integrating \eqref{eq:prediction_error_dynamics} from $i T_{s}$ to $(i+1) T_{s}$ implies
\begin{align*}
    \tilde{\nabla}_{v} \Phi((i+1) T_{s})
    &= e^{A_{s} T_{s}} \tilde{\nabla}_{v} \Phi(i T_{s})
    \stepcounter{equation}\tag{\theequation}\label{eq:pred_err_integral_original}
    \\
    + \int_{i T_{s}}^{(i+1) T_{s}} & e^{A_{s} ((i+1) T_{s} - t)} (h(i T_{s}) -  \nabla_{vv} \Phi(t) \sigma(t)) dt,
\end{align*}
for all $i \in \mathbb{N}_{0}$.
It can be deduced from \eqref{eq:l1ao_alg} and \eqref{eq:pred_err_integral_original} that the following holds:
\begin{align*}
    \tilde{\nabla}_{v} \Phi((i+1) T_{s})&
    \\
    = -\int_{i T_{s}}^{(i+1) T_{s}} &e^{A_{s} ((i+1) T_{s} -t)} \nabla_{vv} \Phi(t) \sigma(t) dt,
    \stepcounter{equation}\tag{\theequation}\label{eq:pred_err_integral}
\end{align*}
for all $i \in \mathbb{N}$.
Since $A_{s}$ is a diagonal matrix,
applying the mean-value theorem element-wise yields
\begin{equation}
\label{eq:tmp2}
    \tilde{\nabla}_{v} \Phi((i+1) T_{s})
    = A_{s}^{-1} (e^{A_{s} T_{s}} - I)
    (\nabla_{vv} \Phi \sigma)^{*},
\end{equation}
for all $i \in \mathbb{N}_{0}$,
where
$(\nabla_{vv}\Phi \sigma)_{j}^{*} := (\nabla_{vv}\Phi(\tau_{j}) \sigma(\tau_{j}))_{j}$
with time indices $\tau_{j} \in (iT_{s}, (i+1)T_{s})$ induced by the mean-value theorem for $j$-th component.
Also, it is straightforward from the definition of $\sigma$ that
\begin{equation}
\label{eq:uncertainty_bound}
    \lVert \sigma(t, v(t)) \rVert
    \leq m_{f}^{-1} \Delta_{\tilde{\nabla}_{vt} \Phi} =: \Delta_{\sigma}, \forall t \in [0, \tau].
\end{equation}
For all $i \in \mathbb{N}_{0}$,
substituting \eqref{eq:l1ao_alg} into \eqref{eq:tmp2} yields
\begin{align*}
    &\lVert \hat{\sigma}((i+1)T_{s}) \rVert
    = \lVert
        \nabla_{vv} \Phi^{-1}((i+1)T_{s})
        e^{A_{s} T_{s}}
        (\nabla_{vv}\Phi \sigma)^{*}
    \rVert
    \\
    &\leq m_{f}^{-1}
    \lVert e^{A_{s} T_{s}} \rVert
    \lVert (\nabla_{vv}\Phi \sigma)^{*} \rVert
    \leq  m_{f}^{-1}
    \lVert (\nabla_{vv}\Phi \sigma)^{*} \rVert
    \\
    &\leq  m_{f}^{-1}
    \sqrt{n_{v}} \lVert (\nabla_{vv}\Phi \sigma)^{*} \rVert_{\infty}
    = m_{f}^{-1}
    \sqrt{n_{v}} \lvert (\nabla_{vv}\Phi \sigma)_{j^{*}}^{*} \rvert
    \\
    &\leq m_{f}^{-1}
    \sqrt{n_{v}} \lVert \nabla_{vv}\Phi(\tau_{j^{*}}) \sigma(\tau_{j^{*}}) \rVert
    \\
    &\leq m_{f}^{-1}
    \sqrt{n_{v}} \lVert \nabla_{vv}\Phi(\tau_{j^{*}}) \rVert \lVert \sigma(\tau_{j^{*}}) \rVert   
    \\
    \stepcounter{equation}\tag{\theequation}\label{eq:tmp1}
    &\leq m_{f}^{-1}
    \sqrt{n_{v}} \Delta_{\nabla_{vv} \Phi} \Delta_{\sigma}
    =: \Delta_{\hat{\sigma}},
\end{align*}
where $j^{*} := \argmax_{j} \lvert (\nabla_{vv} \Phi \sigma)_{j}^{*} \rvert$.
Therefore, $\lVert \hat{\sigma} (i T_{s}) \rVert \leq \Delta_{\hat{\sigma}},
\forall i \in \{0, \ldots, \lfloor \tau/ T_{s} \rfloor \}$.

Proof of \eqref{eq:v_dot_a_bound}:
By \cite[Lemma A.7.1]{hovakimyan2010}, we have
\begin{equation}
\lVert \dot{v}_{a} \rVert_{\mathcal{L}_{\infty}}^{[0, \tau]}
\leq \lVert C(s) \rVert_{\mathcal{L}_{1}}
    \lVert \hat{\sigma} \rVert_{\mathcal{L}_{\infty}}^{[0, \tau]}
= \lVert \hat{\sigma} \rVert_{\mathcal{L}_{\infty}}^{[0, \tau]}
\leq \Delta_{\hat{\sigma}},
\end{equation}
with the filter $C$ with unity DC gain.

Proof of \eqref{eq:sigma_dot_bound}:
First, $\lVert \dot{v}(t) \rVert \leq \lVert \dot{v}_{b}(t) \rVert + \lVert \dot{v}_{a}(t) \rVert \leq \Delta_{\dot{v}_{b}} + \Delta_{\hat{\sigma}} =: \Delta_{\dot{v}}, \forall t \in [0, \tau]$.
Then, by definition, we have
\begin{align*}
    \dot{\sigma}(t, v(t))
    \stepcounter{equation}\tag{\theequation}
    &= \frac{d}{dt} (\nabla_{vv} \Phi^{-1} \tilde{\nabla}_{vt} \Phi)
    \\
    &= -\Big(
        \frac{d \nabla_{vv}  \Phi^{-1}}{dt}  \tilde{\nabla}_{vt} \Phi
        + \nabla_{vv} \Phi^{-1} \frac{d \tilde{\nabla}_{vt}\Phi}{dt}
    \Big)
    \\
    &= \nabla_{vv} \Phi^{-1} \Big(
        \frac{d {\nabla}_{vv} \Phi}{d t}
        \nabla_{vv} \Phi^{-1} \tilde{\nabla}_{vt}\Phi
        - \frac{d \tilde{\nabla}_{vt} \Phi}{d t}
    \Big),
\end{align*}
and
\begin{align}
    &\left \Vert \frac{d \nabla_{vv} \Phi (t, v(t))}{d t} \right \Vert
    = \Vert \nabla_{vvt} \Phi(t) + \nabla_{vvv} \Phi(t) \dot{v}(t) \Vert
    \\
    &\leq \Delta_{\nabla_{vvt} \Phi} + \Delta_{\nabla_{vvv} \Phi} \Delta_{\dot{v}} =: \Delta_{\dot{\nabla}_{vv} \Phi},
    \\
    &\left \Vert \frac{d \tilde{\nabla}_{vt} \Phi (t, v(t))}{d t} \right \Vert
    = \Vert \tilde{\nabla}_{vtt} \Phi(t) + \tilde{\nabla}_{vvt}\Phi(t) \dot{v}(t) \Vert
    \\
    &\leq \Delta_{\tilde{\nabla}_{vtt}\Phi} + \Delta_{\tilde{\nabla}_{vvt}\Phi} 
 \Delta_{\dot{v}} =: \Delta_{\dot{\tilde{\nabla}}_{vt} \Phi}.
\end{align}
This implies
\begin{equation}
\begin{aligned}
    \lVert \dot{\sigma}(t, v(t)) \rVert
    &\leq
    m_{f}^{-1} (
    \Delta_{\dot{\nabla}_{vv} \Phi} m_{f}^{-1} \Delta_{\tilde{\nabla}_{vt} \Phi}
    + \Delta_{\dot{\tilde{\nabla}}_{vt} \Phi}
    )
    \\
    &=: \Delta_{\dot{\sigma}},
\end{aligned}
\end{equation}
which concludes the proof.
\end{proof}

\begin{lemma}
\label{lemma:bounded_estimation_error}
    If $v(t) \in \mathcal{O}_{\nabla}(t, \rho), \forall t \in [0, \tau]$ for some $\tau > 0$,
    then the error in uncertainty estimation $\tilde{\sigma}(t) := \hat{\sigma}(t) - \sigma(t, v(t))$ is bounded as
    \begin{equation}
        \lVert \tilde{\sigma}(t) \rVert
        \leq \begin{cases}
            \Delta_{\sigma}, & \forall t \in [0, T_{s}),
            \\
            \zeta_{2}(A_{s}) T_{s}, & \forall t \in [T_{s}, \tau].
        \end{cases}
    \end{equation}
\end{lemma}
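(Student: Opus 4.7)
The plan is to split at $t = T_s$ to exploit the two operational regimes of the piecewise-constant adaptation law. For $t \in [0, T_s)$, the initialization $\hat{\sigma}(0) = 0$ together with the piecewise-constant hold forces $\hat{\sigma}(t) \equiv 0$, so $\tilde{\sigma}(t) = -\sigma(t, v(t))$; the bound $\Vert \sigma(t, v(t)) \Vert \leq \Delta_{\sigma}$ was already established during the proof of \autoref{lemma:uncertainty_est_bound} (inequality \eqref{eq:uncertainty_bound}), settling this case immediately.

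For $t \in [T_s, \tau]$, write $t \in [iT_s, (i+1)T_s)$ with some $i \geq 1$, and decompose $\tilde{\sigma}(t) = [\hat{\sigma}(iT_s) - \sigma(iT_s)] + [\sigma(iT_s) - \sigma(t)]$. The hold-drift term is immediate: the mean value theorem and \eqref{eq:sigma_dot_bound} give $\Vert \sigma(iT_s) - \sigma(t) \Vert \leq \Delta_{\dot{\sigma}} T_s$. For the sampling-gap term $\hat{\sigma}(iT_s) - \sigma(iT_s)$, I would recycle the closed-form expression uncovered in the proof of \autoref{lemma:uncertainty_est_bound}, namely $\hat{\sigma}(iT_s) = \nabla_{vv}\Phi^{-1}(iT_s) e^{A_s T_s} (\nabla_{vv}\Phi\sigma)^{*}$ with $(\nabla_{vv}\Phi\sigma)^{*}_j = (\nabla_{vv}\Phi(\tau_j)\sigma(\tau_j))_j$ and $\tau_j \in ((i-1)T_s, iT_s)$. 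Adding and subtracting $e^{A_s T_s}\nabla_{vv}\Phi(iT_s)\sigma(iT_s)$ yields
\begin{equation*}
\hat{\sigma}(iT_s) - \sigma(iT_s) = \nabla_{vv}\Phi^{-1}(iT_s)\Bigl\{ e^{A_s T_s}\bigl[(\nabla_{vv}\Phi\sigma)^{*} - \nabla_{vv}\Phi(iT_s)\sigma(iT_s)\bigr] + \bigl[e^{A_s T_s} - I\bigr]\nabla_{vv}\Phi(iT_s)\sigma(iT_s) \Bigr\}.
\end{equation*}
The first inner bracket captures the variation of $\nabla_{vv}\Phi\sigma$ over a window of length at most $T_s$; applying an element-wise MVT with Lipschitz constant $\Delta_{\dot{\nabla}_{vv}\Phi}\Delta_{\sigma} + \Delta_{\nabla_{vv}\Phi}\Delta_{\dot{\sigma}}$, an $\ell_\infty$-to-$\ell_2$ conversion factor $\sqrt{n_v}$, and the contraction $\Vert e^{A_s T_s} \Vert \leq 1$ yields a bound of order $T_s$. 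The second bracket is bounded via $\Vert e^{A_s T_s} - I \Vert \leq \underline{\lambda}(-A_s)\, T_s$ (from $1 - e^{-x} \leq x$ applied to the diagonal entries of $-A_s T_s$), producing the $\underline{\lambda}(-A_s)\Delta_{\nabla_{vv}\Phi}\Delta_{\sigma}$ piece. Factoring out $\Vert \nabla_{vv}\Phi^{-1} \Vert \leq 1/m_f$ and adding the hold-drift term collects everything into precisely $\zeta_{2}(A_s)\, T_s$.

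The hard part I expect is the final bookkeeping: tracking how the element-wise intermediate times $\tau_j$ inherited from \autoref{lemma:uncertainty_est_bound} interact with the operator-norm bounds on $e^{A_s T_s} - I$ and with the $\sqrt{n_v}/m_f$ prefactors, so that the three contributions assemble into \emph{exactly} the stated $\zeta_2(A_s)$ rather than a cruder surrogate with a larger eigenvalue coefficient. Everything else is routine MVT plus triangle inequalities building on the bounds already established in \autoref{lemma:uncertainty_est_bound}.
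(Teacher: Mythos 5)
Your proof is correct and rests on the same machinery as the paper's (the recycled closed form $\hat{\sigma}(iT_{s})=\nabla_{vv}\Phi^{-1}(iT_{s})e^{A_{s}T_{s}}(\nabla_{vv}\Phi\sigma)^{*}$, the element-wise mean-value theorem with the $\sqrt{n_{v}}$ conversion, the linear bound on $\Vert I-e^{A_{s}T_{s}}\Vert$, and the trivial $[0,T_{s})$ case), but your decomposition of the error differs in one respect that is worth pinning down, since it is precisely the "bookkeeping" you flagged. The paper compares $e^{A_{s}T_{s}}(\nabla_{vv}\Phi\sigma)^{*}$ against $\nabla_{vv}\Phi((i+1)T_{s})\sigma(t)$ at the \emph{current} time $t$ inside the matrix-weighted expression; because the mean-value times $\tau_{j}$ lie one sampling interval behind while $t$ ranges over the next one, the drift window for $\sigma$ is of length up to $2T_{s}$, which is exactly where the coefficient $2\Delta_{\dot{\sigma}}\Delta_{\nabla_{vv}\Phi}$ in $\zeta_{2}$ comes from. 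You instead peel off the hold-drift $\sigma(iT_{s})-\sigma(t)$ first (a window of length $T_{s}$, giving a bare $\Delta_{\dot{\sigma}}T_{s}$ with no prefactor) and compare the sampled estimate against $\sigma(iT_{s})$ over another window of length $T_{s}$. Consequently your three contributions do \emph{not} assemble into $\zeta_{2}(A_{s})T_{s}$ exactly; they assemble into
\begin{equation*}
\left( m_{f}^{-1}\sqrt{n_{v}}\left(\Delta_{\sigma}\Delta_{\dot{\nabla}_{vv}\Phi}+\Delta_{\nabla_{vv}\Phi}\Delta_{\dot{\sigma}}\right)+m_{f}^{-1}\underline{\lambda}(-A_{s})\Delta_{\nabla_{vv}\Phi}\Delta_{\sigma}+\Delta_{\dot{\sigma}}\right)T_{s},
\end{equation*}
which is term-by-term dominated by $\zeta_{2}(A_{s})T_{s}$ because $m_{f}^{-1}\sqrt{n_{v}}\Delta_{\nabla_{vv}\Phi}\geq 1$ (strong convexity forces $\Delta_{\nabla_{vv}\Phi}\geq\Vert\nabla_{vv}\Phi\Vert\geq m_{f}$, and $\sqrt{n_{v}}\geq 1$). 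So the lemma follows a fortiori, and the surrogate you worried about is in fact slightly \emph{sharper} than the paper's constant, not cruder; to match the paper's $\zeta_{2}$ literally you would simply upper-bound your expression using that domination. No gap.
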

\begin{proof}
First, $\hat{\sigma}(0) = 0$ implies $\lVert \tilde{\sigma}(t) \rVert = \lVert \sigma(t) \rVert \leq \Delta_{\sigma}, \forall t \in [0, T_{s})$.
For all $t \in [(i+1) T_{s}, (i+2) T_{s}), \forall i \in \mathbb{N}_{0}$,
\begin{equation}
\label{eq:tmp57}
\begin{split}
    &\Vert \tilde{\sigma} (t) \Vert
    = \Vert \hat{\sigma}(t) - \sigma(t) \Vert
    \\
    &= \Vert
        \nabla_{vv}\Phi^{-1} ((i+1)T_{s})
        e^{A_{s} T_{s}}(\nabla_{vv} \Phi \sigma)^{*} - \sigma(t)
    \Vert
    \\
    &\leq m_{f}^{-1}
    \Vert
        e^{A_{s} T_{s}}(\nabla_{vv} \Phi \sigma)^{*} - \nabla_{vv}\Phi ((i+1)T_{s})\sigma(t)
    \Vert.
\end{split}
\end{equation}
Similar to the proof of \autoref{lemma:uncertainty_est_bound}, using the mean-value theorem,
the following inequality can be derived from \eqref{eq:tmp57}:
\begin{align*}
    \Vert \tilde{\sigma} (t) \Vert
    \leq& m_{f}^{-1} \sqrt{n_{v}}
    \stepcounter{equation}\tag{\theequation}
    \\
    &\Vert
        e^{A_{s} T_{s}}\nabla_{vv} \Phi(\tau_{j^{\#}}) \sigma(\tau_{j^{\#}}) - \nabla_{vv}\Phi ((i+1)T_{s})\sigma(t)
    \Vert,
    \\
    \leq& m_{f}^{-1} \sqrt{n_{v}} \left(
        \chi_{1}(t) + \chi_{2}(t) + \chi_{3}(t)
    \right),
\end{align*}
where $j^{\#} := \argmax_{j} \lvert (
e^{A_{s} T_{s}}\nabla_{vv} \Phi(\tau_{j}) \sigma(\tau_{j}) - \nabla_{vv}\Phi ((i+1)T_{s})\sigma(t)
)_{j} \rvert $,
and
\begin{align*}
    \chi_{1}(t)
    := \Vert 
        &\nabla_{vv}\Phi ((i+1)T_{s}) (\sigma(t) - \sigma(\tau_{j^{\#}}))
    \Vert,
    \\
    \chi_{2}(t)
    := \Vert
        & \left(
            \nabla_{vv} \Phi((i+1) T_{s}) - \nabla_{vv} \Phi(\tau_{j^{\#}})
        \right) \sigma(\tau_{j^{\#}})   
    \Vert,
    \\
    \chi_{3}(t)
    := \Vert
        &(I_{n_{v}} - e^{A_{s} T_{s}}) \nabla_{vv} \Phi(\tau_{j^{\#}}) \sigma(\tau_{j^{\#}})
    \Vert.
    \stepcounter{equation}\tag{\theequation}
\end{align*}
From \autoref{lemma:uncertainty_est_bound},
one can deduce the following inequalities,
\begin{align*}
    \chi_{1}(t)
    &\leq \Delta_{\nabla_{vv} \Phi} \lVert \sigma(\tau_{j^{\#}}) - \sigma(t) \rVert
    \leq \Delta_{\nabla_{vv} \Phi} \int_{\tau_{j}^{\#}}^{t} \lVert \dot{\sigma}(\tau) \rVert d\tau
    \\
    &\leq \Delta_{\nabla_{vv} \Phi} \Delta_{\dot{\sigma}} (t - \tau_{j}^{\#})
    \leq 2 \Delta_{\nabla_{vv} \Phi} \Delta_{\dot{\sigma}} T_{s},
    \stepcounter{equation}\tag{\theequation}
    \\
    \chi_{2}(t)
    &\leq \Delta_{\sigma} \left\Vert
        \left(
            \nabla_{vv} \Phi((i+1) T_{s}) - \nabla_{vv} \Phi(\tau_{j^{\#}})
        \right)
    \right\Vert
    \\
    &\leq \Delta_{\sigma}
    \int_{\tau_{j^{\#}}}^{(i+1)T_{s}} \left\Vert \frac{d \nabla_{vv}\Phi }{d t} \right\Vert d t
    \\
    &\leq \Delta_{\sigma} \Delta_{\dot{\nabla}_{vv} \Phi} ((i+1) T_{s} - \tau_{j^{\#}})
    \leq \Delta_{\sigma} \Delta_{\dot{\nabla}_{vv} \Phi} T_{s},
    \\
    \chi_{3}(t)
    &\leq \Delta_{\sigma} \Vert
        (I_{n_{v}} - e^{A_{s} T_{s}} ) \nabla_{vv} \Phi(\tau_{j^{\#}})
    \Vert
    \\
    &\leq \Delta_{\sigma} \Delta_{\nabla_{vv} \Phi} \Vert
        I_{n_{v}} - e^{A_{s} T_{s}}
    \Vert \leq \Delta_{\sigma} \Delta_{\nabla_{vv} \Phi} \underline{\lambda}(-A_{s}) T_{s}.
\end{align*}
A fortiori,
\begin{align*}
    \Vert \tilde{\sigma}(t) \Vert
    \leq m_{f}^{-1} \sqrt{n_{v}}
    (&\chi_{1}(t) + \chi_{2}(t) + \chi_{3}(t))
    \\
    \leq m_{f}^{-1} \sqrt{n_{v}} \Big(&
        (2  \Delta_{\dot{\sigma}} + \underline{\lambda}(-A_{s}) \Delta_{\sigma} )
 \Delta_{\nabla_{vv} \Phi}
 \\
        &+ \Delta_{\sigma} \Delta_{\dot{\nabla}_{vv} \Phi}
    \Big) T_{s}
    =: \zeta_{2}(A_{s}) T_{s},
\end{align*}
for all $t \in [T_{s}, \tau]$, which concludes the proof.

\end{proof}

\subsection{Proof of \autoref{thm:l1ao}}
\label{sec:proof_of_tube}
The proof is done by contradiction.
Suppose $v(t) \in \mathcal{O}_{\nabla}(t, \rho), \forall t \geq 0$ is not true.
From $v(0) \in \mathcal{O}_{\nabla}(0, \rho)$ and the continuity of $\nabla_{v} \Phi$,
there exists $t^{\star} \in (0, \infty)$ such that $\Vert \nabla_{v} \Phi(t^{*}, v(t^{*})) \Vert = \rho$ and $\Vert \nabla_{v} \Phi(t, v(t)) \Vert < \rho, \forall t \in [0, t^{*})$.
Let
\begin{equation}
\begin{split}
    \eta(t)
    &:= \mathcal{L}^{-1}[C(s) \sigma(s)],
    \\
    \hat{\eta}(t)
    &:= \mathcal{L}^{-1}[C(s) \hat{\sigma}(s)] = -\dot{v}_{a}(t),
    \\
    \nu^{\intercal}(t)
    &:= \frac{\partial V(\nabla_{v} \Phi(t, v(t)))}{\partial \nabla_{v} \Phi} \nabla_{vv} \Phi(t, v(t)),
    \\
    \Delta_{\nu}
    &:= \Delta_{\nabla_{\frac{\partial V}{\partial \nabla_{v} \Phi}}} \Delta_{\nabla_{vv} \Phi},
\end{split}
\end{equation}
where $\mathcal{L}^{-1}$ denotes the inverse Laplace transformation.
Then, we have
\begin{equation}
\begin{aligned}
    \dot{V}
    &= \frac{\partial V}{\partial \nabla_{v} \Phi} \dot{\nabla}_{v} \Phi
    = \frac{\partial V}{\partial \nabla_{v} \Phi} (
        \hat{\nabla}_{vt} \Phi
        + \nabla_{vv} \Phi (\dot{v}_{b} + \dot{v}_{a} + \sigma)
    )
    \\
    &\leq -2\beta V
    + \frac{\partial V}{\partial \nabla_{v} \Phi}
     \nabla_{vv} \Phi ( \sigma(t) + \dot{v}_{a}(t))   
    \\
    &= -2\beta V +\nu^{\intercal}(t) (\sigma(t) - \eta(t)) + \nu^{\intercal}(t)( \eta(t) - \hat{\eta}(t)).
\end{aligned}
\end{equation}
By comparison lemma~\cite{khalilNonlinearSystems2002}, we have
\begin{equation}
\label{eq:Lyapunov_ineq}
    V(t) \leq e^{-2\beta t} V_{0} + \chi_{4}(t) + \chi_{5}(t),
\end{equation}
where
\begin{equation}
\begin{split}
    \chi_{4}(t)
    &:= \int_{0}^{t} e^{-2\beta (t-\tau)} \nu^{\intercal}(\tau) (\sigma(\tau) - \eta(\tau)) d\tau,
    \\
    \chi_{5}(t)
    &:= \int_{0}^{t} e^{-2\beta (t-\tau)} \nu^{\intercal}(\tau) (\eta(\tau) - \hat{\eta}(\tau)) d\tau.
\end{split}
\end{equation}

From \autoref{assumption:tube} and \autoref{lemma:uncertainty_est_bound},
$\Vert \nu \Vert_{\mathcal{L}^{\infty}}^{[0, t^{*}]}
\leq \Delta_{\nu}$
and $\Vert \dot{\sigma} \Vert_{\mathcal{L}^{\infty}}^{[0, t^{*}]} \leq \Delta_{\dot{\sigma}}$.
Since $\chi_{4}$ is the solution to the following equation
\begin{align}
    \dot{\chi}_{4}(t)
    &= -2\beta \chi_{4}(t) + \nu^{\intercal}(t) \phi(t), \quad \chi_{4}(0) = 0,
    \\
    \phi(s)
    &= (I_{n_{v}} - C(s)) \sigma(s),
\end{align}
and $\Vert \sigma(0) \Vert \leq \Delta_{\sigma}$,
it can be shown by \cite[Lemma A.1]{lakshmananSafeFeedbackMotion2020a} that
\begin{equation}
\label{eq:chi_4_bound}
    \Vert \chi_{4} \Vert_{\mathcal{L}^{\infty}}^{[0, t^{*}]}
    \leq
    \Delta_{\nu}
    \left(
        \frac{\Delta_{\sigma}}{\vert 2\beta - \omega \vert} + \frac{\Delta_{\dot{\sigma}}}{2\beta \omega}
    \right)
    =: \zeta_{1} (\omega).
\end{equation}

Let $\tilde{\eta}(t) := \hat{\eta}(t) - \eta(t)$.
The dynamics of $\tilde{\eta}$ are given as
$\dot{\tilde{\eta}}(t) = -\omega \tilde{\eta}(t) - \omega\sigma(t), \forall t \in [0, T_{s})$,
and the solution is $\tilde{\eta}(t) = -\int_{0}^{t} e^{-\omega(t- \tau)} \omega \sigma(\tau) d \tau$.
Therefore,
for all $t \in [0, T_{s})$, we have
\begin{align*}
    &\Vert \tilde{\eta}(t) \Vert
    \leq \Delta_{\sigma} \int_{0}^{t} \omega e^{-\omega(t-\tau)} d \tau
    = \Delta_{\sigma} (1-e^{-\omega t})
    \leq \Delta_{\sigma} \omega t
    \\
    \stepcounter{equation}\tag{\theequation}
    &\leq \Delta_{\sigma} \omega T_{s}
    =: \zeta_{3}(\omega) T_{s}
    < (\zeta_{2}(A_{s}) + \zeta_{3}(\omega)) T_{s}.
\end{align*}
For all $t \geq T_{s}$,
the dynamics are given by  $\dot{\tilde{\eta}}(t) = -\omega \tilde{\eta}(t) + \omega\tilde{\sigma}(t)$,
and the solution is $\tilde{\eta}(t) = e^{-\omega (t-T_{s})} \tilde{\eta}(T_{s})
+ \int_{T_{s}}^{t} e^{-\omega(t- \tau)} \omega \tilde{\sigma}(\tau) d \tau$.
Then, we have
\begin{align*}
    \Vert \tilde{\eta}(t) \Vert
    &\leq e^{-\omega(t-T_{s})} \Vert \tilde{\eta} (T_{s}) \Vert
    + \int_{T_{s}}^{t} e^{-\omega(t-\tau)} \omega \Vert \tilde{\sigma}(\tau) \Vert d\tau
    \\
    &\leq (\zeta_{3}(\omega) +  (1 - e^{-\omega (t-T_{s})}) \zeta_{2} (A_{s})) T_{s}
    \\
    &\leq (\zeta_{2} (A_{s}) + \zeta_{3}(\omega)) T_{s}.
    \stepcounter{equation}\tag{\theequation}
\end{align*}
Therefore, $\Vert \tilde{\eta} \Vert_{\mathcal{L}^{\infty}}^{[0, t^{*}]}
\leq (\zeta_{2}(A_{s}) + \zeta_{3}(\omega)) T_{s}$,
which implies
\begin{equation}
\label{eq:chi_5_bound}
\begin{aligned}
    \Vert \chi_{5}(t) \Vert
    &\leq 
    \Delta_{\nu}
    \int_{0}^{t} e^{-2\beta(t-\tau)} \Vert \tilde{\eta}(t) \Vert d\tau
    \\
    &\leq
    \Delta_{\nu}
    \frac{\zeta_{2}(A_{s}) + \zeta_{3}(\omega)}{2\beta} T_{s}
    =: \zeta_{4}(A_{s}, \omega) T_{s},
\end{aligned}
\end{equation}
for all $t \in [0, t^{*}]$.

From \eqref{eq:Lyapunov_ineq}, \eqref{eq:chi_4_bound}, and \eqref{eq:chi_5_bound},
we have
\begin{equation}
\label{eq:Lyapunov_ineq2}
\begin{split}
    V(t)
    &\leq e^{-2\beta t} V_{0} + \zeta_{1}(\omega) + \zeta_{4}(A_{s}, \omega) T_{s},
    \\
    &< V_{0} + \zeta_{1}(\omega) + \zeta_{4}(A_{s}, \omega) T_{s},
\end{split}
\end{equation}
for all $t \in (0, t^{*}]$, which implies
\begin{equation}
    V(t)
    < V_{0} + \zeta_{1}(\omega) + \zeta_{4} (A_{s}, \omega) T_{s},
\end{equation}
for all $t \in [0, t^{*}]$.
Also, $\Vert \nabla_{v} \Phi(t^{*}, v(t^{*})) \Vert = \rho$.
From \autoref{assumption:baseline_optimizer},
\begin{equation}
\begin{split}
 &\underline{\alpha} \rho^{2}
 \leq V(t^{*})
 < V_{0} + \zeta_{1}(\omega) + \zeta_{4}(A_{s}, \omega) T_{s}
\\
 &\Rightarrow
T_{s}
> \frac{\underline{\alpha} \rho^{2} - V_{0} - \zeta_{1}(\omega)}{\zeta_{4}(A_{s}, \omega)},
\end{split}
\end{equation}
which contradicts \eqref{eq:conditions_sampling_time_and_filter_bandwidth}.
Therefore, $v(t) \in \mathcal{O}_{\nabla}(t, \rho), \forall t \geq 0$.
The first part of \eqref{eq:UUB} can be deduced from \autoref{assumption:baseline_optimizer} and \eqref{eq:Lyapunov_ineq2}.

The rest of the proof is straightforward from \autoref{lemma:ball_tube}.

\subsection{Proof of \autoref{corollary:cost_bound}}
\label{sec:proof_of_cost_tube}
The left-hand side of the inequality is trivial.
For the right-hand side,
the convexity of $\Phi(t, \cdot)$ implies
\begin{equation}
    \Phi(t, v(t)) - \Phi(t, v^{\star}(t))
    \leq \nabla_{v} \Phi(t, v(t))^{\intercal}(v(t) - v^{\star}(t)),
\end{equation}
and $\nabla_{v} \Phi(t, v(t))^{\intercal}(v(t) - v^{\star}(t))
\leq \Vert \nabla_{v} \Phi(t, v(t)) \Vert \Vert v(t) - v^{\star}(t) \Vert$,
which concludes the proof by \autoref{thm:l1ao}.

\subsection{Simulation settings}
\label{sec:simulation_settings}
All simulations are performed on an M1 Macbook Air.
Fixed-time Euler integration is used for numerical simulation with the time step $\Delta t = 10^{-3}$.

\subsubsection{Example 1}
For the problem in \autoref{sec:example1},
we compare the following methods:
i) modified PCIP in \eqref{eq:ip_tvopt_modified} with $\epsilon=1$
and ii) $\mathcal{L}_{1}$-AO in \eqref{eq:l1ao_alg} with the PCIP method in \eqref{eq:ip_tvopt} as the baseline optimizer.
The optimal solution is obtained using a time-invariant solver, ECOS~\cite{domahidiECOSSOCPSolver2013}.
The gain of the PCIP method is $P=10$ by default,
and the parameters of the $\mathcal{L}_{1}$-AO are set as
$T_{s} = 10^{-3}$,
$\omega = 10^{3}$,
$A_{s} = - 1$.

\subsubsection{Example 2}
For the problem in \autoref{sec:example2},
we compare the following methods: i) PCIP vs PCIP+L1AO (i.e., $\mathcal{L}_{1}$-AO with the baseline of PCIP), ii) Modified PCIP vs Modified PCIP+L1AO (i.e., $\mathcal{L}_{1}$-AO with the baseline of modified PCIP).
The optimal solution is obtained using a time-invariant solver, ECOS~\cite{domahidiECOSSOCPSolver2013}.
The gain of PCIP is set as $P=I_{2}$,
and the gain of the modified PCIP is set as $P=10 I_{2}$ with $\epsilon = 0.1$.
The parameters of $\mathcal{L}_{1}$-AO are set as
$T_{s} = 10^{-3}$, $\omega=50$, and $A_{s} = -0.1I_{2}$.
The simulation time is set as $t_{f} = 50$.
The target position is modeled as $x_{d}(t) = 15 \left[\cos\left( \frac{2\pi t}{t_{f}}\right), \sin\left( \frac{2\pi t}{t_{f}}\right)\right]^{\intercal}$.
The penalty parameter for interior-point formulation is set as $c(t) = 50 e^{t/50}$.

\fi

\end{document}